\documentclass[dvipsnames]{amsart}[12]

\usepackage[a4paper]{geometry}
\usepackage[backend=biber, datamodel=mrnumber, maxbibnames=99, sortcites]{biblatex}
\usepackage{amsmath}
\usepackage{amsthm}
\usepackage{mathrsfs}
\usepackage{amsfonts}
\usepackage{amssymb}
\usepackage{amscd}
\usepackage{array}
\usepackage{amssymb}
\usepackage[all, cmtip]{xy}
\usepackage{tikz}
\usetikzlibrary{arrows}
\usetikzlibrary{cd}
\usepackage{ulem}
\usepackage{enumitem}

\usepackage{thm-restate}
\usepackage{hyperref}

\usepackage{xcolor}
\usepackage{quiver}

\newtheorem{theorem}{Theorem}[subsection]
\newtheorem{lemma}[theorem]{Lemma}
\newtheorem{fact}[theorem]{Fact}

\newtheorem{definition}[theorem]{Definition}
\newtheorem{corollary}[theorem]{Corollary}
\newtheorem{proposition}[theorem]{Proposition}
\newtheorem{remark}[theorem]{Remark}

\newcommand{\PP}{\mathbb{P}}

\newcommand{\EE}{\mathbb{E}}
\newcommand{\LL}{\mathbb{L}}
\renewcommand{\AA}{\mathbb{A}}

\newcommand{\OO}{\mathscr{O}}

\newcommand{\Hom}{Hom}

\newcommand{\FF}{\mathbb{F}}

\newcommand{\fX}{\mathfrak X}

\newcommand{\relSpec}{\underline{\mathrm{Spec}}}
\newcommand{\VV}{\mathbb{V}}
\newcommand{\gray}[1]{{\color{gray}#1}}

\renewcommand{\AA}{\mathbb{A}}

\newcommand{\fM}{\mathfrak M}

\newcommand{\vir}{\mathrm{vir}}

\newcommand{\id}{\mathrm{id}}

\newcommand{\Rachel}[1]{{\color{NavyBlue}#1}}

\newcommand{\dsp}[2]{M_{#1}^{\circ}#2}

\newcommand{\rank}{\mathrm{rank}}
\newtheorem{example}{Example}

\DeclareMathOperator{\Spec}{Spec}

\renewcommand{\Hom}{\mathrm{Hom}}

\newcommand{\mf}{\mathfrak}

\newif\ifmoditem
\newcommand{\setupmodenumerate}{%
  \global\moditemfalse
  \let\origmakelabel\makelabel
  \def\moditem##1{\global\moditemtrue\def\mesymbol{##1}\item}%
  \def\makelabel##1{%
    \origmakelabel{##1\ifmoditem\rlap{\mesymbol}\fi\enspace}%
    \global\moditemfalse}%
}

\newcommand{\mls}{\mathscr}
\newcommand{\mc}{\mathcal}

\newcommand{\Dqc}{\mathsf{D}_{\mathrm{qc}}}
\newcommand{\fp}{\mathfrak{p}}

\begin{document}

\subjclass[2020]{14C15, 14A20, 14N35}

\title{Siebert's formula for virtual pullbacks}

\author{Xuanchun Lu}
\author{Rachel Webb}

\begin{abstract} For a DM-type morphism of algebraic stacks $f: X \to  Y$ with a relative perfect obstruction theory $\EE \to \LL_{X/Y}$ having a global resolution, we give a formula for the induced virtual pullback in terms of $\EE$ and $f$.
We apply our formula to prove functoriality properties of the virtual fundamental class in Gromov-Witten theory.
\end{abstract}

\maketitle

\section{Introduction}

Let $f: X \to Y$ be a DM-type morphism of algebraic stacks with $X$ stratified by global quotient stacks, and let $\phi: \EE \to \LL_{X/Y}$ be a perfect obstruction theory in the sense of Behrend-Fantechi \cite{BF}.
Recall that given a projective morphism $T \to Y$,
    a vector bundle $E \to T$,
    and an integral closed substack $V \subseteq E$,
    we obtain a cycle $[(V, E, T)_{Y}]$
    in Kresch's Chow group $A_{\ast}(Y)$.
Our main theorem is the following formula for the virtual pullback $f^!_\phi$.
\begin{theorem}[Theorem \ref{thm:siebert2}]\label{thm:siebert-intro} Assume that $\EE$ is globally isomorphic to a 2-term complex of locally free sheaves on $X$. Then virtual pullback is computed by the formula
\[
f_{\phi}^{!}(\;[(V, E, T)_Y]\;) =
        \left\{ c(\EE^{\vee})^{-1} \cap s(f; V, E, T)\right\}_{\dim(V, E, T) + \rank(\EE)},
\]
where $c(-)$ is the total Chern class, $s(f; V, E, T)$ is an element of a completion of $A_*(X)$ determined by $f$ and the triple $(V, E, T)$ (Definition \ref{def:segre}), and $\{-\}_d$ means to take the dimension-$d$ piece.
\end{theorem}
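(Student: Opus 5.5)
Write $\EE \simeq [E^{-1} \to E^0]$ globally, with $E^{-1}, E^0$ locally free on $X$, and set $E_i = (E^{-i})^\vee$. The plan is to reduce the virtual pullback to an intersection with the zero section of a vector bundle stack, and then to a Segre class computation in the style of Fulton's formula $0^!_E[C] = \{c(E)\cap s(C)\}$.

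\emph{Step 1: the intrinsic normal cone.} Recall the Behrend--Fantechi construction. The obstruction theory $\phi$ gives a closed embedding $\mathfrak C_{X/Y} \hookrightarrow \mathfrak E := h^1/h^0(\EE^\vee) = [E_1/E_0]$. By definition, $f^!_\phi$ is the composite of specialization to the normal cone $\sigma: A_*(Y) \to A_*(\mathfrak C_{X/Y})$, pushforward into $\mathfrak E$, and the Gysin map $0^!_{\mathfrak E}$. Pulling back along the smooth surjection $E_1 \to \mathfrak E$, which has relative dimension $\rank E_0$, gives a cone $C_1 \subseteq E_1$. This reduces the problem to computing $0^!_{E_1}[\,\text{cycle in } C_1\,]$ for an honest vector bundle $E_1$, with the dimension shift by $\rank E_0$ kept track of.

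\emph{Step 2: apply to a generator $[(V,E,T)_Y]$.} By Kresch's description, the class $[(V,E,T)_Y]$ is obtained from $T$ projective over $Y$ via $0^!_E[V]$ and proper pushforward. Virtual pullback commutes with proper pushforward and with Gysin maps for vector bundles (Behrend--Fantechi/Manolache functoriality). So $f^!_\phi[(V,E,T)_Y]$ is the pushforward from $X_T = X\times_Y T$ of $0^!_{E}\, f_T{}^!_\phi [V]$. Here $f_T{}^!_\phi[V]$ is obtained from the normal cone $C_{V_X/V}$ of $V_X = X\times_Y V$ in $V$, pulled back into $E_1$. The Fulton-type formula $0^!_{F}[C] = \{c(F)\cap s(C)\}_{\text{expected dim}}$ for a cone $C$ in a bundle $F$ then gives
\[
f^!_\phi[(V,E,T)_Y] = \{c(E_1)\,c(E_0)^{-1} \cap s\}_{d} = \{c(\EE^\vee)^{-1}\cap s(f;V,E,T)\}_d .
\]
Here $c(\EE^\vee)^{-1} = c(E_1)c(E_0)^{-1}$, up to the sign conventions, which I will check against the orientation $c(\EE^\vee)=c(E_0)/c(E_1)$. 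The class $s(f;V,E,T)$ should be precisely the quantity from Definition~\ref{def:segre}, namely the pushforward to $X$ of $c(E)\cap s(C_{V_X/V})$, i.e.\ a Segre class of the cone, capped by the Chern class of $E$ to account for $0^!_E$. The degree bookkeeping $d = \dim(V,E,T) + \rank \EE$ follows from $\dim V - \rank E$ together with the relative dimensions.

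\emph{Main obstacle.} The hard step is justifying the Segre-class formula for intersection with the zero section over stacks, where $X$ is only stratified by quotient stacks and $A_*$ must be completed. On a quotient-stack stratum I would use Kresch's result that classes are represented by triples with $T$ projective, together with the projective completion $\PP(C\oplus 1)$, to prove $0^!_F[C]=\{c(F)\cap s(C)\}$. Independence of the choice of resolution should then follow from the fact that the final expression depends only on $\EE$ and the canonical cone.
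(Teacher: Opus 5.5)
Your outline is sound, and it reaches the paper's key intermediate expression by a genuinely different route.

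\textbf{How the routes differ.} The paper never uses functoriality of $f^!_\phi$. It first proves the cycle-level formula $\sigma([(V,E,T)_Y])=[(C_{X_V}V,C_{X_E}E,C_{X_T}T)_{C_XY}]$ (Proposition \ref{prop:sigma-on-cycles}), using the base-change Lemmas \ref{lem:flat-basechange} and \ref{lem:closed-basechange} for $M^\circ_XY$. It substitutes this into $f^!_\phi=(p_\EE^*)^{-1}i_*\sigma$ and pulls back to $E_1$. It then rewrites the resulting triple on $E_1$ as $(\widetilde C_{X_V}V,E_1|_{X_E},E_1|_{X_T})$. Only then does it use Fact \ref{fact:cycle-is} and the commutation of flat pullback with proper pushforward to reach $g_{X_T*}(p_{X_E}^*)^{-1}i_{X_V*}(p_{E_1|_{X_V}}^*)^{-1}[\widetilde C_{X_V}V]$.

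You reach the same expression faster by citing Manolache's compatibility of virtual pullback with projective pushforward and flat pullback. That reduces everything to the fundamental-class case $\sigma[V]=[C_{X_V}V]$ for integral $V$. From there the two arguments coincide. Both apply Fulton's formula on $X_V$ to the cone $\widetilde C_{X_V}V\subseteq E_1|_{X_V}$. Both use $s(C_{X_V}V)=c(E_0)\cap s(\widetilde C_{X_V}V)$, the Segre class of the cone stack with presentation $[\widetilde C_{X_V}V/E_0]$; this is where $c(E_0)^{-1}$ comes from, which you should say explicitly.

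Your version is shorter but outsources the real content to Manolache's functoriality theorem, whose proof in turn reduces to the corresponding compatibilities of $\sigma$. The paper proves that compatibility directly on Kresch triples, and gets the explicit formula of Proposition \ref{prop:sigma-on-cycles} as a by-product.

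\textbf{Error in your last step.} Your identification of $s(f;V,E,T)$ is wrong, and the justification you give for it is false. Definition \ref{def:segre} sets $s(f;V,E,T)=g_{X_T*}(p_{X_E}^*)^{-1}i_{X_V*}\,s(C_{X_V}V)$, and there is no factor $c(E)$. The operation $0^!_E=(p_{X_E}^*)^{-1}$ is not capping with $c(E)$: Fulton's formula holds only for cones, and $X_V\subseteq X_E$ is an arbitrary integral closed substack. Nor can one ``push forward to $X$'' from $X_V$, since $X_V\to X$ is not proper.

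The correct final step starts from
\[
g_{X_T*}(p_{X_E}^*)^{-1}i_{X_V*}\{c(E_1)c(E_0)^{-1}\cap s(C_{X_V}V)\}_{\dim V+\rank\EE}.
\]
Move the Chern classes of bundles pulled back from $X$, and the degree truncation, outside the three operations. This uses the projection formula, compatibility of Chern classes with flat pullback, and the fact that $(p_{X_E}^*)^{-1}$ lowers dimension by $\rank E$. The result is exactly $\{c(\EE^\vee)^{-1}\cap s(f;V,E,T)\}_{\dim(V,E,T)+\rank\EE}$.
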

Our theorem is motivated by Siebert's formula \cite[Thm 4.6]{siebert} for the virtual fundamental class associated to $\phi$, a formula that is in turn motivated by Fulton's formula \cite[Example 4.1.8]{fulton} for the intersection of a cone with the zero section of a vector bundle. Our theorem generalizes Siebert's formula in three ways.
\begin{itemize}
\item We work with algebraic stacks and Kresch's theory \cite{kresch} of cycle class groups.
\item We drop the requirement that $X$ be  quasi-projective (in particular, globally embeddable into a stack smooth over $Y$). Along the way, we interpret Fulton's canonical class of a morphism $f: X \to Y$ \cite[Example 4.2.6]{fulton} as the Segre class of the intrinsic normal cone of $f$, and generalize the definition to all situations when that cone has a global presentation.
\item We extend the formula to virtual pullbacks. A key ingredient is a cycle-level formula (Proposition \ref{prop:sigma-on-cycles}) for specialization to the normal cone for DM-type morphisms of algebraic stacks.
\end{itemize}

\noindent
We expect Theorem \ref{thm:siebert-intro} will have many applications due to the following corollary.
\begin{corollary}
Assume that $\EE$ is globally isomorphic to a 2-term complex of locally free sheaves on $X$. The virtual pullback $f^!_\phi$ depends only on $\EE$ and $f$, not on the morphism $\phi$.
\end{corollary}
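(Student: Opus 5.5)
The corollary should follow directly from Theorem \ref{thm:siebert-intro}. The plan has three steps: reduce to a generating set of cycles on $Y$, evaluate $f^!_\phi$ on those generators by the formula, and check that nothing in the formula involves $\phi$.

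\textbf{Reducing to generators.} Virtual pullback $f^!_\phi$ is a homomorphism on Kresch's Chow group $A_*(Y)$. It therefore suffices to know its values on a set of generators. We use the generators $[(V,E,T)_Y]$ recalled before Theorem \ref{thm:siebert-intro}: $T\to Y$ projective, $E\to T$ a vector bundle, $V\subseteq E$ an integral closed substack. Kresch's cycle class group is a quotient of the free group on such triples, so every class in $A_*(Y)$ is a finite integer combination of classes of this form. If the paper's setup only yields a generating set under a hypothesis on $Y$ (e.g.\ stratification by quotient stacks), we would invoke that hypothesis here.

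\textbf{Checking independence of $\phi$.} Fix such a generator. Theorem \ref{thm:siebert-intro} expresses $f^!_\phi([(V,E,T)_Y])$ as the degree $\dim(V,E,T)+\rank(\EE)$ part of $c(\EE^\vee)^{-1}\cap s(f;V,E,T)$, and we check each ingredient in turn.
\begin{itemize}
\item The total Chern class $c(\EE^\vee)$ is defined through any global two-term locally free resolution of $\EE$. It depends only on the isomorphism class of $\EE$ in the derived category, since it equals the alternating product of the Chern classes of the terms, and that product is independent of the resolution.
\item The rank $\rank(\EE)$ is likewise an invariant of $\EE$ alone.
\item By Definition \ref{def:segre}, the Segre class $s(f;V,E,T)$ is built from $f$ and the triple, via the intrinsic normal cone of $f$ base changed along the triple. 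It involves no obstruction theory.
\end{itemize}
The right-hand side is thus the same for any two perfect obstruction theories $\phi,\phi'\colon \EE\to\LL_{X/Y}$ with the same source $\EE$. Hence $f^!_\phi=f^!_{\phi'}$ on generators, and therefore on all of $A_*(Y)$.

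\textbf{Main obstacle.} The delicate point is confirming that $s(f;V,E,T)$ really is independent of $\phi$. This must be read off Definition \ref{def:segre}. If that definition instead passes through a global presentation of the cone inside a vector bundle stack coming from $\EE$, we would need a short argument that the Segre class does not depend on the chosen embedding. We would model that argument on the proof that Fulton's canonical class is independent of the ambient smooth embedding.

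A second point is that the two sides of Theorem \ref{thm:siebert-intro} live in different places: the right-hand side lies in a completion of $A_*(X)$, while $f^!_\phi$ lands in $A_*(X)$ itself. Extracting a fixed-dimension piece recovers an element of $A_*(X)$, so equality of the dimension pieces gives equality of the virtual pullbacks.
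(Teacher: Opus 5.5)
Your proposal is correct and matches how the paper intends the corollary to follow from Theorem \ref{thm:siebert-intro}; the paper gives no separate proof. On the generators $[(V,E,T)_Y]$ of $A_*(Y)$, the right-hand side involves $\EE$ only through $c(\EE^\vee)$ and $\rank(\EE)$, and these can be computed from one fixed resolution used for both $\phi$ and $\phi'$, so you do not even need resolution-independence; it involves $f$ only through $s(f;V,E,T)$. The point you flag as delicate is genuine, because the global presentation of $C_{X_V}V$ used in Definition \ref{def:segre} is produced from $\phi$ via Example \ref{ex:pot} and Remark \ref{rmk:present}. The paper handles it as you anticipate: Lemma \ref{lem:independent} shows that $c(T)\cap s(C)$ does not depend on the presentation $[C/T]$, using a fiber-product argument modeled on Fulton's Example 4.1.6(c).
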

As an example, we use Theorem \ref{thm:siebert-intro} to reprove the Splitting Axiom in Gromov-Witten theory (\cite[Def 7.1(3)]{BM96} and \cite[Axiom III]{BehrendGW}). The key technical lemma (Lemma \ref{lem:key-lemma}) in our argument may be read as a statement  of ``functoriality in the source curve'' of the Gromov-Witten obstruction theory. Indeed, we prove Lemma \ref{lem:key-lemma} in greater generality than is needed for the Splitting Axiom as we anticipate applications to other settings, including wall-crossing theorems for contractions of source curves and Gromov-Witten theory of tame Artin stacks.

\subsection{Conventions}

Unless otherwise noted, all stacks $X$ are of finite type over a ground field and algebraic in the sense of Laumon and Moret-Bailly \cite{LMB}, as this is the hypothesis of Kresch \cite{kresch}; the associated cycle class group is denoted $A_*(X)$.
A vector bundle on $X$ is $\relSpec_X(Sym(\mls E))$ where $\mls E$ is a finite rank locally free sheaf on $X$.

\subsection{AI disclosure}
The authors used GPT-5.6 Sol to proofread this paper for mathematical and typographical errors.

\subsection{Acknowledgements}
The second author thanks Dhruv Ranganathan for pointing out the utility of Siebert's formula and Felix Janda for helpful discussions on the proof of the Splitting Axiom. The authors are grateful to Cristina Manolache for helpful conversation.
The first author was supported by the ERC Advanced Grant MSAG, and the second author was partially supported by the NSF grant DMS 2501528.
This collaboration began at the Bootcamp of the 2025 Summer Research Institute in Algebraic Geometry.

\section{Specialization to the normal cone}\label{sec:sigma}

The goal of this section is to  understand the specialization to the normal cone used in \cite[Sec 5.1]{kresch}.

\subsection{Kresch's  Chow groups}
\label{sec: kresch chow}
    Given an algebraic stack $X$ of finite type over a ground field and algebraic in the sense of \cite{LMB},
    recall that a cycle on $X$ in the sense of Kresch \cite{kresch}
    is represented by an integral linear combination of triples $(V, E, T)_X$,
    where
    \begin{itemize}
        \item $g_{T} \colon T \to X$ is a projective morphism,
        \item $p_{E} \colon E \to T$ is a vector bundle, and
        \item $i_V: V \hookrightarrow E$ is an integral closed substack.
    \end{itemize}
    The \emph{dimension} of the cycle $(V, E, T)_X$ is $\dim (V) - \rank(E)$; that is, $[(V, E, T)_X]$ lives in $A_{\dim (V) - \rank(E)}(X)$.
If  $\alpha$ is an integral linear combination of closed substacks of $E$, we write $(\alpha, E, T)_X$ for the corresponding integral linear combination of triples.
The cycle class groups $A_*(X)$
    have flat pullback and projective pushforwards
    that enjoy the usual compatibility properties.

    \begin{fact}\label{fact:cycle-is}
    In $A_*(X)$ we have an equality
    \[[(V, E, T)_X] = g_{T\ast}(p_{E}^{\ast})^{-1}i_{V*}([(V, V, V)_V]).\]
    \end{fact}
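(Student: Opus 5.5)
We prove the identity by unwinding Kresch's definition of the cycle class of a triple and checking each operation on the right-hand side. Recall how Kresch's groups are built. For a stack $X$ stratified by quotient stacks, $A_*(X)$ is a colimit over pairs $(E \to T)$ with $T \to X$ projective and $E$ a vector bundle on $T$. The class of $(V,E,T)_X$ is the image of the naive cycle $[V] \in A^{\mathrm{naive}}_{\dim V}(E)$ under the structure map to $A_{\dim V - \rank E}(X)$. For a stack such as $V$, the triple $(V,V,V)_V$ uses the identity projective morphism and the rank-zero bundle. So $[(V,V,V)_V]$ is just the fundamental class $[V] \in A_{\dim V}(V)$.

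The plan has three steps.

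First, $i_V$ is a closed immersion, hence projective. By Kresch's definition of proper pushforward on triples, it sends $(V,V,V)_V$ to $(V,V,V)_E$, and hence $i_{V*}[(V,V,V)_V] = [V] \in A_{\dim V}(E)$.

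Second, consider $p_E^*\colon A_k(T) \to A_{k+\rank E}(E)$. By Kresch's homotopy invariance (his Cor.~2.5.7), this map is an isomorphism. So $(p_E^*)^{-1}[V]$ is the unique class $\beta \in A_{\dim V - \rank E}(T)$ with $p_E^*\beta = [V]$. We claim $\beta = [(V,E,T)_T]$, where $T$ maps to itself by the identity. To see this, compute $p_E^*[(V,E,T)_T]$ using Kresch's description of flat pullback on triples: pull back the projective morphism and the bundle along $p_E$. This gives the triple $(V\times_T E,\ p_E^*E,\ E)_E$.

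This is the main obstacle. We must identify $(V\times_T E, p_E^*E, E)_E$ with $(V,E,E)_E$, i.e.\ with $[V]$. The approach is to use the diagonal section $\Delta\colon E \to p_E^*E$, which is a section of the pulled-back bundle. Translating by $\Delta$ gives an automorphism of the bundle $p_E^*E$ over $E$. It carries $V\times_T E$ to a cycle whose relation to $V$ is controlled by Kresch's relation in the colimit: two triples related by a bundle map and the compatible pullback of cycles define the same class. Concretely, the projection $p_E^*E = E\times_T E \to E$ onto the second factor is a vector bundle over $E$. Under the isomorphism given by $(a,b) \mapsto (a-b,\, b)$, the substack $V \times_T E$ corresponds to a pullback along a flat bundle projection. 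Kresch's defining equivalence (classes are compatible under smooth pullback along bundle projections) then identifies the class with $[V] \in A_*(E)$.

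If this direct argument gets messy, the fallback is a uniqueness argument, sketched here but not checked. Both $p_E^*[(V,E,T)_T]$ and $[V]$ restrict to the same class on the open substack of $E$ where everything is a quotient stack. Then, by Kresch's result that on quotient-stack stratified stacks the class group agrees with the Edidin–Graham/naive groups in top degree, one concludes they agree.

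Third, since $g_T$ is projective, $g_{T*}[(V,E,T)_T] = [(V,E,T)_X]$ directly by the definition of pushforward on triples, namely composing the projective morphism with $g_T$. Combining the three steps gives the identity.
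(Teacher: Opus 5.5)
The paper gives no proof of this statement; it records it as a Fact that follows from Kresch's definitions. Your three-step unwinding is the natural way to justify it, and Steps 1 and 3 are fine.

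\textbf{The gap is in Step 2.} You correctly reduce it to the following. For the bundle $p_E^*E=E\times_T E$ over $E$, with structure projection $pr_2$, the cycle $pr_1^{-1}(V)=V\times_T E$ must define the same element of $\widehat{A}_{\dim V}(E)$ as $[V]\in A^\circ(E)$. But the transition map $A^\circ(E)\to A^\circ(E\times_T E)$ in Kresch's system is flat pullback along $pr_2$. It sends $[V]$ to $[pr_2^{-1}(V)]=[E\times_T V]$, which is in general a different cycle.

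Your translation argument does not bridge this, for two reasons.
\begin{itemize}
\item Translation by the diagonal section is an affine, not a linear, automorphism of $p_E^*E$. So it is not one of Kresch's transition maps. It preserves naive classes only via an $\AA^1$-homotopy, which you would have to supply.
\item It carries $V\times_T E$ to $\{(c,b): c+b\in V\}$, the preimage of $V$ under the addition map. That map is a bundle projection, but not the structure projection $pr_2$, so Kresch's relation still does not identify this cycle with $[V]$.
\end{itemize}
In general no automorphism of $E\times_T E$ over $E$ moves $pr_1^{-1}(V)$ onto $pr_2^{-1}(V)$; compare the fibers over a point $b\in V$. So an honest rational equivalence in the naive group is unavoidable.

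Your fallback does not work either. Two classes that agree on an open substack can still differ, by excision, by a class pushed forward from the complement.

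\textbf{How to fill the gap.} Let $r=\rank E$ and set $\mathcal{W}=\{(a,b,t):(1-t)a+tb\in V\}\subset E\times_T E\times\AA^1$.
\begin{itemize}
\item $\mathcal{W}$ is the preimage of $V\times\AA^1$ under the surjective bundle map $(a,b,t)\mapsto((1-t)a+tb,\,t)$ over $T\times\AA^1$. This map is split by the diagonal $e\mapsto(e,e)$.
\item Hence $\mathcal{W}$ is a rank-$r$ vector bundle over $V\times\AA^1$, so it is integral and flat over $\AA^1$.
\item Its fibers over $t=0$ and $t=1$ are $V\times_T E$ and $E\times_T V$. These are therefore rationally equivalent in $A^\circ_{\dim V+r}(E\times_T E)$.
\end{itemize}
This gives $p_E^*[(V,E,T)_T]=[(V,E,E)_E]$, and your three steps then combine as you intended.
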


    \begin{fact}\label{fact:zero-fiber} If $X$ has a morphism $X \to \AA^1$, we denote by $X_0$ the fiber over zero. There is a morphism $A_*(X) \to A_*(X_0)$, namely intersection with the zero fiber, that sends $[(V, E, T)_X]$ to $[([V_0], E_0, T_0)_{X_0}]$ if $V$ is not contained in $E_0$, and it sends $[(V, E, T)_X]$ to zero otherwise (\cite[p. 502]{kresch} and \cite[Remark 2.3]{fulton}).
\end{fact}

We now recall specialization to the normal cone. Let $X \to Y$ be a DM-type morphism of algebraic stacks in the sense of \cite[Def 2.1]{manolache-pullback}. There  is  a flat deformation space $M^\circ_XY \to \PP^1$ whose  fiber over 0 is the intrinsic normal cone $ C_XY$, and the complement of this zero fiber is isomorphic to $Y \times \AA^1$ (\cite[Proof of Thm 2.31]{manolache-pullback}). The space $M^\circ_X Y$ is an algebraic stack in the sense of \cite[Tag 026O]{stacks-project}, but the diagonal may not be separated.
Specialization to the normal cone is the homomorphism $\sigma: A_*(Y) \to A_*(C_X Y)$ defined as the ``composition''
\[
A_*(Y) \to A_*(Y \times \AA^1) \leftarrow A_*(M^\circ_X Y) \to A_*(C_XY)
\]
where
\begin{itemize}
\item The morphism $A_*(Y) \to A_*(Y \times \AA^1)$ is flat pullback.
\item The morphism $A_*(M^\circ_X Y) \to  A_*(Y \times \AA^1)$ is flat pullback; in the above ``composition,'' we send a class $\alpha \in A_*(Y \times \AA^1)$ to any lift along this morphism.
The difference between any two such lifts by excision is pushed forward from $ C_X Y$ and by Fact \ref{fact:zero-fiber} maps to zero under the next morphism.
\item The morphism $A_*(M^\circ_X Y) \to A_*( C_XY)$ is intersection with the zero fiber.
\end{itemize}

\begin{remark}
This definition of $\sigma$ assumes  that at least some portion of Kresch's Chow theory goes through for the stack $M^\circ_X Y$.
However, as noted in \cite[footnote 1 on p.529]{kresch}, the diagonal of this stack is not separated when $X \to Y$ is not representable, and consequently $M^\circ_X Y$ is outside the framework considered in \cite{kresch}. Nevertheless this definition of $\sigma$ has been used in both \cite{KKP} and \cite{manolache-pullback}. As far as we can tell, the only place where the non-separated diagonal of $M^\circ_X Y$ could cause issues is in the use of excision: \cite[Prop 2.3.6]{kresch} ultimately relies on the ability to extend coherent sheaves from open substacks of $M^\circ_X Y$. Fortunately the proof of \cite[Prop 2.3.6]{kresch} goes through verbatim if one assumes the inclusion of the open substack is quasi-compact and quasi-separated, so that pushforward preserves quasicoherence.
In the definition of $\sigma$, we apply excision to the open substack $Y \times \AA^1 \to M^\circ_X Y$ that is the complement of an effective Cartier divisor, hence the inclusion is quasi-compact and quasi-separated.
\end{remark}

The main result of Section \ref{sec:sigma} is to prove the following explicit description of $\sigma$.

\begin{restatable}{proposition}{sigmacyc}
    \label{prop:sigma-on-cycles}
    Let $X \to Y$ be a DM-type morphism of algebraic stacks.
    Let $[(V, E, T)_Y]$ be a cycle on $Y$,
    with $V \subset E$ an integral substack.
    Let $X_T = X \times_{Y} T$, $X_E = X \times_{Y} E$,
    and $X_V = X \times_{Y} V$.
    Then
    \[\sigma(\; [(V, E, T)_Y]\;) =
        [(\,[{C}_{X_V}V], \;{C}_{X_E}E, \; {C}_{X_T}T\,)_{C_X Y}]. \quad \quad \quad
    \]
\end{restatable}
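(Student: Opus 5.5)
The plan is to follow the definition of $\sigma$ step by step, producing at each stage an explicit lift of the cycle. Flat pullback along $Y\times\AA^1 \to Y$ sends $[(V,E,T)_Y]$ to $[(V\times\AA^1, E\times\AA^1, T\times\AA^1)_{Y\times\AA^1}]$. The natural candidate for a lift to $M^\circ_XY$ is the triple of deformation spaces
\[
(\,M^\circ_{X_V}V,\; M^\circ_{X_E}E,\; M^\circ_{X_T}T\,)_{M^\circ_XY}.
\]
Since $X_T\to T$, $X_E\to E$ and $X_V\to V$ are base changes of the DM-type morphism $X\to Y$, they are again DM-type, so these deformation spaces exist. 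So the first task is to check that this is a legitimate cycle on $M^\circ_XY$. This has three parts.
\begin{enumerate}
\item[(a)] The morphism $M^\circ_{X_T}T \to M^\circ_XY$ is projective. I would show $M^\circ_{X_T}T \cong M^\circ_XY\times_Y T$ using functoriality of the deformation space under base change along $T\to Y$ (in Manolache's construction $M^\circ$ is built from local charts and a Rees-algebra construction, which commutes with flat base change). I would like to avoid assuming $T\to Y$ is flat. Instead I would observe that $M^\circ_{X_T}T$ is the closure of $T\times\AA^1$ inside $M^\circ_XY\times_Y T$, i.e.\ a closed substack, and projectivity follows.
\item[(b)] The morphism $M^\circ_{X_E}E\to M^\circ_{X_T}T$ is a vector bundle, namely the pullback of $E$. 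This holds because $E\to T$ is smooth, so the deformation space is compatible with this flat base change: $M^\circ_{X_E}E \cong E\times_T M^\circ_{X_T}T$.
\item[(c)] $M^\circ_{X_V}V$ is an integral closed substack of $M^\circ_{X_E}E$. It is the closure of $V\times\AA^1$ and $V$ is integral, so this follows from functoriality of $M^\circ$ for closed immersions (closed immersions induce closed immersions of deformation spaces) together with flatness of $M^\circ_{X_V}V$ over $\PP^1$.
\end{enumerate}

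Next, restricting this triple to the open substack $Y\times\AA^1$ recovers the flat pullback of $(V,E,T)_Y$, because each deformation space restricts to the product with $\AA^1$ away from $0$. Hence it is a valid lift. Now apply Fact \ref{fact:zero-fiber}. The integral substack $M^\circ_{X_V}V$ is not contained in the zero fiber, since it is flat over $\PP^1$. Its zero fiber is the Cartier divisor $C_{X_V}V$, so the fiber cycle is $[C_{X_V}V]$. Similarly, the zero fibers of the bundle and base are $C_{X_E}E$ and $C_{X_T}T$. This yields exactly the claimed triple over $(M^\circ_XY)_0=C_XY$.

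The main obstacle I expect is the base-change and closed-immersion functoriality of $M^\circ$ in the non-representable DM-type setting, especially (a) for a non-flat projective $T\to Y$. Manolache's construction is local and uses smooth charts and groupoid presentations, and the diagonal is non-separated, so "closure of $T\times\AA^1$" must be handled carefully. My first attempt would be to work smooth-locally on $Y$, with $X$ replaced by an étale atlas over a chart, where $M^\circ$ is a quotient of classical deformation spaces $M_UW$. There I would use Fulton's functoriality for the deformation to the normal cone: a closed subscheme of the base gives a closed subscheme of the deformation space, and flat base change is compatible. Then I would descend, checking compatibility with the groupoid relations.

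A secondary concern is that Kresch's Fact \ref{fact:zero-fiber} and excision are being applied on the non-separated stack. The preceding Remark already addresses this for the open $Y\times\AA^1$.
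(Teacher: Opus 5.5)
Your proposal is correct and follows essentially the same route as the paper. You lift to the triple $(M^\circ_{X_V}V, M^\circ_{X_E}E, M^\circ_{X_T}T)_{M^\circ_XY}$, verify it is a Kresch cycle using compatibility of $M^\circ$ with flat base change (for $E \to T$) and with closed embeddings (proved, as you anticipate, inductively through the local charts and groupoid presentations), check that it restricts to the flat pullback on $Y\times\AA^1$, and intersect with the zero fiber via Fact~\ref{fact:zero-fiber} and Manolache's identification of that fiber with the intrinsic normal cone. The only minor differences are that the paper gets projectivity in (a) by factoring $T \to Y$ as a closed embedding followed by a flat morphism (rather than embedding $M^\circ_{X_T}T$ in $M^\circ_XY\times_Y T$), and gets integrality in (c) as the scheme-theoretic image of a quasi-compact map from the integral $V\times\AA^1$ (rather than from flatness over $\PP^1$).
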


\noindent
We will prove the proposition after some preliminary results on deformation spaces.

\subsection{Properties of deformation spaces}\label{sec:prop-of-mxy}
For a DM-type morphism $X \to Y$ of algebraic stacks, the deformation space $M^\circ_X Y$ has a morphism to $Y$, and a commuting square with $X' \to Y'$ mapping to $X \to Y$ induces a morphism $M^\circ_{X'} Y' \to M^\circ_X Y$.
The definition of the deformation space is summarized below.

\begin{definition} \label{def:mxy} The definition of $M^\circ _X Y$ is iterative, depending on properties of $X \to Y$.
\begin{itemize}
    \item[(i)] Closed embedding \cite[p.~87]{fulton}. We will use both the definition in terms of blowups (\cite[p.~87]{fulton}) and the one in terms of graph closure (\cite[Remark~5.1.1]{fulton}) below.
\item[(ii)] Locally closed embedding  \cite[p.~489]{kresch2}. Factor $X \to Y$ as $X \to U \to Y$ where $X \to U$ (resp. $U \to Y$) is a closed (resp. open) embedding. Then $M^\circ_X Y$ is obtained by gluing $M^\circ_X U$ and $Y \times \AA^1$ along $U \times \AA^1$.
\item[(iii)] Local immersion  \cite[p. 489]{kresch2}. One forms a commuting diagram
\begin{equation}\label{eq:uvxy}
\begin{tikzcd}
U \arrow[r]  \arrow[d] & V \arrow[d] \\
X \arrow[r] & Y
\end{tikzcd}
\end{equation}
with $U \to  V$ a closed embedding and vertical maps \'etale and surjective.
Setting $R = U \times_X U$ and $S = V \times_Y V$, one can show that $R \to S$ is a locally closed embedding and that there is an \'etale groupoid $[M^\circ_R S \rightrightarrows M^\circ_U V]$. The quotient (algebraic space) of this groupoid is defined to be $M^\circ_X Y$.
\item[(iv)] DM-type \cite[Thm 2.31]{manolache-pullback} (cf. \cite[Sec 5]{kresch}). Similar to the previous case, one forms a commuting diagram \eqref{eq:uvxy}  where $U$ and $V$ are schemes, the vertical maps are smooth and surjective, and $U \to V$ is a closed embedding. Define $R$ and $S$ as before. This time $R \to S$ is a local immersion of schemes and $M^\circ_X Y$ is the quotient of the smooth groupoid $[M^\circ_R S \rightrightarrows M^\circ_U V].$
\end{itemize}
\end{definition}

\begin{lemma}\label{lem:flat-basechange}
Let $X \to Y$ be a DM-type morphism of algebraic stacks, let $Y' \to Y$ be a flat morphism representable by schemes, and set $X' := X \times_Y Y'$. Then there is a fibered square
 \[
        \begin{tikzcd}
            M_{X^{\prime}}^{\circ}Y^{\prime} \arrow[d] \arrow[r] & M_{X}^{\circ}Y \arrow[d] \\
            Y^{\prime} \arrow[r] & Y.
        \end{tikzcd}
    \]
\end{lemma}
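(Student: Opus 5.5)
We follow the iterative structure of Definition \ref{def:mxy}. At each stage we show that forming $M^\circ_X Y$ commutes with flat base change $Y' \to Y$ representable by schemes. The natural map $M^\circ_{X'}Y' \to M^\circ_X Y \times_Y Y'$ comes from the functoriality of deformation spaces applied to the commuting square $X' \to Y'$ over $X \to Y$. It suffices to prove that this map is an isomorphism, and we may check this smooth-locally on $Y$ (and on $Y'$).

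\emph{Step 1: closed embeddings.} Take $X \to Y$ a closed embedding with ideal $\mathscr I$, first with $Y$ and $Y'$ schemes. We use the Rees algebra description: $M^\circ_X Y$ is the spectrum of $\bigoplus_{n \in \mathbb Z} \mathscr I^n t^{-n}$, with $\mathscr I^n = \OO_Y$ for $n \le 0$, on the affine chart over $\AA^1$. The chart at infinity is simply $Y \times (\PP^1 \setminus 0)$, which commutes with any base change. Flatness of $Y' \to Y$ gives $\mathscr I^n \otimes \OO_{Y'} = (\mathscr I\OO_{Y'})^n$, and $\mathscr I \OO_{Y'}$ is exactly the ideal of $X'$. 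Since the extended Rees algebra is a direct sum, tensoring it with $\OO_{Y'}$ gives the extended Rees algebra of $X' \subset Y'$. This is the same argument as flat base change for blowups in \cite{fulton}. For stacks $Y$ the construction is defined by descent from smooth charts, so the scheme case suffices.

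\emph{Step 2: locally closed embeddings and local immersions.} In case (ii), factor $X \to U \to Y$ and base change to $X' \to U' \to Y'$ with $U' = U \times_Y Y'$. The gluing of $M^\circ_X U$ and $Y\times\AA^1$ along $U\times\AA^1$ commutes with base change, because fiber product commutes with the colimit given by Zariski gluing. Step 1 then handles the piece $M^\circ_X U$. In case (iii), choose the diagram \eqref{eq:uvxy} and pull it back along $Y'$. This gives $U' \to V'$ with $V' = V \times_Y Y'$ and $U' = U\times_X X'$. The map $U' \to V'$ is still a closed embedding, and the vertical maps are still étale surjective. We also get $R' = R\times_Y Y'$ and $S' = S \times_Y Y'$. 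The flat maps $V' \to V$ and $S' \to S$ are representable by schemes. Applying case (ii) to these, we get $M^\circ_{U'}V' = M^\circ_U V \times_Y Y'$, and similarly for $R', S'$. Quotients of étale groupoids commute with flat base change along $Y$: the quotient stack of the pulled-back groupoid is the pullback of the quotient stack, because $[A/G]\times_Y Y' = [A\times_Y Y'/G\times_Y Y']$ for groupoids over $Y$.

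\emph{Step 3: DM-type.} We repeat Step 2 with smooth atlases. Choose \eqref{eq:uvxy} with $U, V$ schemes, smooth surjective vertical maps and $U \to V$ a closed embedding. Pull everything back to $Y'$; since $Y'\to Y$ is representable by schemes, $V'$ and $U'$ are again schemes. Then $R' \to S'$ is a local immersion of schemes base changed from $R \to S$. Step 2 identifies $M^\circ_{R'}S'$ and $M^\circ_{U'}V'$ as pullbacks, and the quotient of the smooth groupoid commutes with the base change. We must also check that the resulting map is the canonical one from functoriality. Independence of the choice of atlas, which is used to define $M^\circ_X Y$, lets us use the pulled-back atlas for $X'\to Y'$.

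The main obstacle is bookkeeping rather than substance. The key computation is in Step 1. The delicate points are compatibility of the identifications with the groupoid structure maps, and the fact that the deformation spaces live over $Y\times\PP^1$ rather than over $Y$; we handle the latter by base changing along $Y'\times\PP^1 \to Y\times\PP^1$.
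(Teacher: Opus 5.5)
Your proposal is correct and follows essentially the same route as the paper. Both arguments induct through the four cases of Definition~\ref{def:mxy}. In cases (iii) and (iv), both use that the base-changed charts $U', V', R', S'$ (still schemes because $Y' \to Y$ is representable by schemes) present $M^\circ_{X'}Y'$, and that forming the groupoid quotient commutes with base change.

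The only difference is in the closed-embedding case. You use the extended Rees algebra $\bigoplus_n \mathscr I^n t^{-n}$, while the paper describes $M^\circ_X Y$ as an open subset of $Bl_{X\times\{0\}}(Y\times\PP^1)$. This is the same flat-base-change computation in different language.
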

\begin{proof}
We prove the lemma inductively in each of the four cases listed in Definition \ref{def:mxy}.

When $X \to Y$ is a closed embedding, recall that $M^\circ_X Y$ is the complement in $Bl_{X \times \{0\}} Y \times \PP^1$ of an embedded copy of $Bl_XY$. Since blowups commute with flat pullback, a straightforward computation shows that the lemma holds in this case.

When $X \to Y$ is locally closed, write $X \to  U \to Y$ where $X \to U$ is a closed embedding and $U \to Y$ is an open embedding. By the previous case, the open sets $M^\circ_X U$ and $Y \times \AA^1$ that cover $M^\circ_X Y$ are compatible with flat pullback, as is their intersection $U \times \AA^1.$

When $X \to Y$ is a local immersion, the proof is morally the same as the previous case, but we explain in some detail. Let $U$, $V$, $R$, and $S$ be as in \eqref{eq:uvxy}, and set $V' = V \times_Y Y'$, $U' = U \times_Y Y'$, $S' = V' \times_{Y'} V'$, and $R' = U' \times_{X'} U'$. We have a tower of commuting cubes
\begin{equation}\label{eq:tower}\begin{tikzcd}[row sep={15,between origins}, column sep={30,between origins}]
      & R' \ar{rr}\arrow[dd, shift right] \arrow[dd, shift left]\ar{dl} & &R\arrow[dd, shift right] \arrow[dd, shift left]\ar{dl} \\
    S' \ar[crossing over]{rr} \arrow[dd, shift right] \arrow[dd, shift left] & & S \\
      & U'  \ar{rr} \ar{dl} \arrow[dd]& &  U \ar{dl}\arrow[dd] \\
    V' \ar{dd} \ar[crossing over]{rr} && V \ar[from=uu,crossing over, shift left] \ar[from=uu,crossing over, shift right] \\
    &X' \ar{rr} \ar{dl} && X \ar{dl}  \\
Y' \ar{rr} && Y \ar[from=uu,crossing over]
\end{tikzcd}\end{equation}
where all front, back, and horizontal squares are fibered, but the left and right faces may not be. Since $U \to V$ and $R \to S$ are locally closed embeddings, it follows from the previous case that $M^\circ_{U'} V'$ (resp. $M^\circ_{R'} S'$) is the base change of $M^\circ_U  V$ (resp. $M^\circ_{R} S$) along $Y' \to Y$. Since formation of the quotient of an \'etale groupoid commutes with base change,
the quotient $M^\circ_{X'} Y'$ of $[M^\circ_{R'} S' \rightrightarrows M^\circ_{U'} V']$ is the base change of $M^\circ_X Y$ along $Y' \to Y$, as desired.

When $X \to Y$ is DM-type, the inductive argument runs as in the previous case.
\end{proof}

The next lemma says that compatibility of deformation spaces and closed embeddings is analogous to that of blowups and closed embeddings (cf. \cite[{}22.2.7]{vakil}).
\begin{lemma}\label{lem:closed-basechange}
Let $f: X \to Y$ be a DM-type morphism of algebraic stacks, let $Y'  \to Y$ be a closed embedding, and set $X' = X \times_Y Y'$. Then $M^\circ_{X'} Y'$ is the scheme theoretic image of the composition
\begin{equation}
\label{eq:i}
Y' \times \AA^1 \to Y \times \AA^1 \to M^\circ_X Y.
\end{equation}
\end{lemma}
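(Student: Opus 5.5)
We prove the lemma by the same induction through the four cases of Definition \ref{def:mxy} that was used for Lemma \ref{lem:flat-basechange}. In each case the first step is to produce the natural morphism $M^\circ_{X'} Y' \to M^\circ_X Y$ coming from the square $X' \to Y'$ over $X \to Y$, and to check that it is a closed embedding. We then show that $Y'\times\AA^1$ is schematically dense in $M^\circ_{X'}Y'$. Together these say that $M^\circ_{X'}Y'$ is exactly the scheme theoretic image of \eqref{eq:i}.

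\emph{Case (i): closed embeddings.} Here we use the graph-closure description \cite[Remark~5.1.1]{fulton}. Write $M^\circ_XY$ locally as $\Spec \bigoplus_{n\in\mathbb Z} I^{n}t^{-n}$ with $I^n=\OO$ for $n\le 0$, viewed inside $\OO_Y[t,t^{-1}]$; this is the extended Rees algebra. For $Y'$ cut out by an ideal $J$, the ideal of $X'$ in $Y'$ is $I' = (I+J)/J$. The image of $\bigoplus I^n t^{-n}$ in $(\OO_Y/J)[t,t^{-1}]$ is $\bigoplus I'^n t^{-n}$, so the Rees algebra of $I'$ is a quotient of the Rees algebra of $I$. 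This gives the closed embedding. Since $\bigoplus I'^n t^{-n}\subseteq (\OO_Y/J)[t,t^{-1}]$, the algebra is $t$-torsion free, which is exactly schematic density of the locus $t\neq 0$. This is the same argument as in \cite[22.2.7]{vakil} for blowups.

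\emph{Case (ii): locally closed embeddings.} Factor as $X\to U\to Y$. The base change $X'\to U'\to Y'$ gives the analogous factorization. Scheme theoretic image is local on the target, so we may compute it separately on the open cover $M^\circ_XU \cup Y\times\AA^1$. On the first piece we apply case (i). On the second piece the image is just $Y'\times\AA^1$.

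\emph{Cases (iii) and (iv).} We form the tower \eqref{eq:tower}, now with $Y'\to Y$ a closed embedding. Setting $V'=V\times_YY'$ and so on, all front, back and horizontal squares are again fibered, and $V'\to V$, $S'\to S$ are closed embeddings. By the previous cases, $M^\circ_{U'}V'$ and $M^\circ_{R'}S'$ are the scheme theoretic closures of $V'\times\AA^1$ in $M^\circ_UV$ and of $S'\times\AA^1$ in $M^\circ_RS$. The smooth (or \'etale) atlas $M^\circ_UV\to M^\circ_XY$ is flat, and scheme theoretic image of a quasi-compact morphism commutes with flat base change. Hence the closure $Z$ of $Y'\times\AA^1$ in $M^\circ_XY$ pulls back to the closure of $V'\times\AA^1$, and the same holds on $R,S$. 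The groupoid presenting $M^\circ_{X'}Y'$ is therefore the restriction of $[M^\circ_RS\rightrightarrows M^\circ_UV]$ to $Z$, so its quotient is $Z$.

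\emph{Expected obstacle.} The main difficulty is in this last case. We must verify that the flat-base-change argument is legitimate when $M^\circ_XY$ has non-separated diagonal. We also need to know that the pullback of $M^\circ_XY$ along the atlas really is $M^\circ_UV$ and similarly for $R,S$; the paper uses this implicitly in citing \cite{manolache-pullback}. For the first point, it should suffice that \eqref{eq:i} is quasi-compact and quasi-separated, exactly as in the Remark above, since it is the composite of a closed embedding with the complement of a Cartier divisor. Then scheme theoretic image is computed by the kernel of $\OO\to \pi_*\OO$, and that kernel commutes with flat pullback.
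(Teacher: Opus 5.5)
Your proposal is correct and follows essentially the paper's proof. Both arguments induct through the four cases of Definition~\ref{def:mxy}, use that \eqref{eq:i} is quasi-compact so that scheme-theoretic image commutes with flat base change, and descend along the atlas $M^\circ_UV \to M^\circ_XY$ in the last two cases; the paper phrases that step as a fibered square plus \cite[Tag 04ZN]{stacks-project}, while you restrict the groupoid to $Z$. In case (i) you compute the image directly via the extended Rees algebra instead of the paper's graph construction inside $E\times\AA^1$ with \cite[Tag 07RK]{stacks-project}, which is slightly more direct since schematic density comes for free.
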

\begin{proof}
Let $i$ denote the composition \eqref{eq:i}. Then $i$ is quasi-compact, being the composition of a closed embedding and the  base change of the quasi-compact open immersion $\AA^1 \to \PP^1$. It follows from \cite[Tag 0CMK]{stacks-project} that
\begin{equation}\label{eq:fact}
\text{formation of the scheme-theoretic image of $i$ commutes with flat base change.}
\end{equation}
From here, we prove the lemma in each of the four cases in Definition \ref{def:mxy}.

When $X \to Y$ is a closed embedding, by \eqref{eq:fact} we can work Zariski locally on $Y$. So assume $Y$ is affine
    and $X$ is the zero locus of a section $s$ of a (trivial) vector bundle $E$.
    The graph construction of $M^{\circ}_X Y$ says that
    $M^\circ_{X} Y$ is the closure of the image of the morphism
    \[
        Y \times (\AA^1 \setminus \{0\}) \to E \times \AA^1 \quad \quad \quad  \quad (y, t) \mapsto (t^{-1}s(y), t).
    \]
    We have a closed embedding $Y' \to Y$ and
    $s$ restricts to the section $s'$ of $E|_{Y'}$ defining $X'$.
    Therefore the canonical morphism $M^{\circ}_{X'}Y' \to M^\circ_X Y$
    fits into a commuting diagram
    \[
        \begin{tikzcd}
            M^\circ_{X'}Y' \arrow[r]  \arrow[d, hookrightarrow] & M^{\circ}_X Y \arrow[d, hookrightarrow] \\
            E|_{Y'} \times \AA^1 \arrow[r, hookrightarrow] &  E \times \AA^1
        \end{tikzcd}
    \]
    where all hooked arrows are known  to be closed embeddings.
    It follows from \cite[Tag 07RK(3)]{stacks-project} that
    the top horizontal arrow is also a closed embedding.

    The remainder of the proof moves inductively through the last three cases in Definition \ref{def:mxy} as in the proof of Lemma \ref{lem:flat-basechange}. We write out the argument when $X \to Y$ is a local immersion. Define $U'$, $V'$, $R'$, and $S'$ as in \eqref{eq:tower}. Consider the commuting diagram
  \[
        \begin{tikzcd}[row sep=15]
            S^{\prime} \times \AA^{1} \arrow[r]\arrow[d] &
                M_{R^{\prime}}^{\circ}S^{\prime} \arrow[r]\arrow["{s^{\prime}}",d]&
                M_{R}^{\circ}S \arrow["s", d] \\
            V^{\prime} \times \AA^{1} \arrow[d] \arrow[r] &
                M_{U^{\prime}}^{\circ}V^{\prime} \arrow[r]\arrow[d]&
                M_{U}^{\circ}V\arrow[d]\\
                X' \times \AA^1 \arrow[r] & M^\circ_{X'} Y' \arrow[r] & M^\circ_X Y
        \end{tikzcd}
    \]

    The three rectangles with no corners in the middle column are fibered, and the top two rows represent scheme-theoretic images by the base of locally closed embeddings. By \eqref{eq:fact} we have that the top right square is fibered. Hence the bottom right square is fibered by \cite[Tag 04ZN]{stacks-project}, and the bottom row represents a scheme-theoretic image by \eqref{eq:fact} again.
\end{proof}

\subsection{Proof of Proposition~\ref{prop:sigma-on-cycles}}

We can now prove the desired formula for $\sigma$.

\sigmacyc*

\begin{proof}
We first show that $(\dsp{X_{V}}{V}, \dsp{X_{E}}E, \dsp{X_{T}}T)$
            defines a Kresch cycle $\alpha$ on $\dsp{X}{Y}$, or in other words
 \begin{itemize}
        \item[(i)] The canonical map $M^\circ_{X_T} T \to M^{\circ}_X Y$ is
            projective.
        \item[(ii)] The canonical map $M^\circ_{X_E} E \to M^\circ_{X_T} T$ is
            a vector bundle.
        \item[(iii)] The canonical map $M^\circ_{X_V} V \to M^\circ_{X_E} E$ is
            a closed embedding.
            \item[(iv)] The stack $M^\circ_{X_V} V$ is integral.
    \end{itemize}
    Item (ii) is a consequence of
    Lemma~\ref{lem:flat-basechange},
    whereas item (iii) is a consequence of
    Lemma~\ref{lem:closed-basechange}.
    Item (i) follows by combining (ii) and  (iii),
    since a projective morphism factors as a closed embedding
    followed by a flat morphism. For (iv), note that by Lemma \ref{lem:closed-basechange} we have that $M^\circ_{X_V} V$ is the closure of the composition $i: V \times \AA^1 \to E \times \AA^1 \to M^\circ_{X_E} E$. Since $V \times \AA^1$ is integral and $i$ is quasi-compact (see \eqref{eq:fact}), by \cite[Tag 0CML]{stacks-project} the closure $M^\circ_{X_V} V$ is also integral.
We now apply $\sigma$ to $[(V, E, T)_Y]$.
\begin{itemize}
\item The flat pullback of $[(V, E, T)_Y]$ to $Y \times \AA^1$ is by definition $\beta := [(V \times \AA^1, E \times \AA^1, T \times \AA^1)_{Y \times \AA^1}]$.
\item The flat pullback of $\alpha$ to $Y \times \AA^1$ is also $\beta$.
\item The intersection of $\alpha$ with the zero fiber is $[( [C_{X_V} V], C_{X_E}E, C_{X_T} T)_{C_XY}]$. This follows from Fact \ref{fact:zero-fiber} and \cite[Thm 2.31]{manolache-pullback}.
\end{itemize}

\end{proof}
\section{Segre classes of cone stacks and a formula for virtual pullback}

\subsection{Segre classes of cones}

Let $X$ be an algebraic stack, let $C$ be a cone on $X$ as in \cite{BF}, and let $\PP(C \oplus 1)$ be its projective closure. Mimicking \cite[Definition 17.2.1]{AF}
in the case when $X$ is a global quotient stack, we define the total Segre class of $C$ by
\[
s(C) := q_{\mathbb{P}(C \oplus 1)*}\left( \sum_{i \geq 0} c_1(\mls  O_{\mathbb{P}(C \oplus 1)}(1))^i \cap [\mathbb{P}(C \oplus 1)] \right) \quad \quad \quad \in \prod  A_*(X)
\]
where $q_{\mathbb{P}(C \oplus 1)}: \mathbb{P}(C \oplus 1) \to X$ is the structure morphism and  $[\PP(C \oplus 1)]$ is the fundamental class $[(\PP(C \oplus 1), \PP(C \oplus 1), \PP(C \oplus 1))_{\PP(C \oplus 1)}]$ in Kresch's group $A_*(\PP(C \oplus 1))$. The group $\prod A_*(X)$ where $s(C)$ lives is the direct product of the groups $A_k(X)$, since on an algebraic stack $s(C)$ could be nonzero in infinitely many degrees.

\subsection{Segre classes of cone stacks}\label{sec:segre1}

Let $X$ be an algebraic stack. Cone stacks over $X$ were defined in \cite[Def 1.8]{BF}.
If $\mf C \to X$ is a cone stack, a \emph{global presentation} of $\mf C$ is an isomorphism $\mf C \simeq [C/T]$ where $C \to X$ is a cone, $T \to X$ is a vector bundle, and the action of $T$ on $C$ is induced by a morphism of cones $T \to C$.

\begin{remark}\label{rmk:present}
If $\mf C \to \mf D$ is a closed embedding of cone stacks on $X$ and $\mf D$ has a global presentation $\mf D = [D/E]$, then $\mf C$ has a global presentation of the form $[C/E]$ where $C = D \times_{\mf D} \mf C$. Indeed, $C$ is a cone on $X$ by \cite[Remark on p.55]{BF}, and the morphism $E \to \mf D$ factors through the vertex $X \to \mf D$, hence through $\mf C \to \mf D$. It follows that $E \to D$ factors through $C$.
\end{remark}

\begin{lemma}\label{lem:independent}
If $[C_1/T_1] = [C_2/T_2]$ are two presentations of the same cone stack, then
\[
c(T) \cap s(C) = c(T') \cap s(C').
\]
\end{lemma}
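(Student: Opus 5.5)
Two presentations of the same cone stack are related through a common refinement, so I will first treat the special case of a surjection of vector bundles and then deduce the general statement. (In the display, $T, C$ stand for $T_1, C_1$ and $T', C'$ for $T_2, C_2$.)

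\emph{Reduction.} Set $\mf C = [C_1/T_1] = [C_2/T_2]$ and $C_{12} := C_1 \times_{\mf C} C_2$. Since $C_i \to \mf C$ is a $T_i$-torsor, the projection $C_{12} \to C_1$ is a $T_2$-torsor and $C_{12}\to C_2$ is a $T_1$-torsor. Both torsors come from the pulled-back structure of cones over $X$, so $C_{12}$ is again a cone on $X$, and $\mf C = [C_{12}/T_1\oplus T_2]$. Moreover, $C_{12} \to C_1$ is smooth with fibers $T_2$. After unwinding the torsor, I expect $C_{12} \simeq C_1 \times_X T_2$ as cones; this is where linearity of the actions is used. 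Granting this, the lemma reduces by symmetry to the following claim: for a cone $C$ and a vector bundle $F$ on $X$,
\[
c(T\oplus F)\cap s(C\times_X F) = c(T)\cap s(C).
\]
By the Whitney formula, this is equivalent to $s(C\oplus F) = c(F)^{-1}\cap s(C)$.

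\emph{Key step.} The identity $s(C\oplus F) = c(F)^{-1}\cap s(C)$ is the stack version of Fulton's Example 4.1.5 and Proposition 4.1 type computations. I plan to reduce it to the vector bundle case. The Segre class $s(C)$, defined via $\PP(C\oplus 1)$, satisfies the usual properties on Kresch's Chow groups, and all the needed tools are available there: projective pushforward, flat pullback, Chern classes of vector bundles, and the projective bundle formula. The standard argument runs as follows. On $\PP(C\oplus F\oplus 1)$, the closed subcone $\PP(C\oplus 1)$ is the zero locus of the tautological section of $q^*F\otimes\OO(1)$. Alternatively, one can use the characterization of $s(C)$ as the Segre class of the zero section of $C$, or the formula
\[
s(C) = \sum_i c_1(\OO(1))^i\cap[\PP(C\oplus1)].
\]
Either approach gives the identity after a computation with $c(q^*F\otimes\OO(1))$, as in Fulton's proof of Example 4.1.5 (blowing up $\PP(C\oplus 1)$ inside $\PP(C\oplus F\oplus1)$ if needed). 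A cleaner route is to use that $s$ of a cone equals $\sum c_1(\OO(1))^i$ on the projective cone. One can then pass to the associated sheaf of graded algebras, where $\operatorname{Sym}$ of a direct sum gives $\PP(C\oplus F\oplus 1)$ as a join. Since only flat pullback and projective pushforward are involved, this step works on Kresch's Chow groups for stacks stratified by quotient stacks, with intersection theory valid verbatim.

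\emph{Main obstacle.} I expect the main obstacle to be the identification $C_{12}\simeq C_1\times_X T_2$ as cones over $X$, rather than merely as a torsor. The $T_2$-torsor $C_{12}\to C_1$ is the pullback of the torsor $C_2\to\mf C$, and need not be trivial a priori. If it is not split, I would avoid the splitting altogether. Instead, I would prove directly that for a smooth surjection of cones $C'\to C$ that is a torsor under $F$ (an exact sequence $0\to F\to C'\to C\to0$ of cones in the sense of Behrend–Fantechi), one has $s(C') = c(F)^{-1}\cap s(C)$. To do this, I would deform $C'$ to $C\oplus F$ via the standard $\AA^1$-family of extensions (scaling the extension class by $t$), and use homotopy invariance and flatness of the family to see that the Segre class is constant.
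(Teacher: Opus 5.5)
\textbf{Same reduction, different proof of the key identity.} Your reduction is the paper's. The paper also forms $Z=C_1\times_{\mathfrak C}C_2$ and uses \cite[Lemma 1.3]{BF} to get exact sequences of cones $0\to T_2\to Z\to C_1\to 0$ and $0\to T_1\to Z\to C_2\to 0$. It then concludes $c(T_1)\cap s(C_1)=c(T_1)c(T_2)\cap s(Z)=c(T_2)\cap s(C_2)$ by quoting Lemma~\ref{lemma: properties of segre}. That lemma gives $s(C')=c(E)\cap s(C)$ for every exact sequence $0\to E\to C\to C'\to 0$, split or not, and its proof is \cite[Example 4.1.6]{fulton} transcribed to Kresch's groups.

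You differ in how you prove that identity. First, drop the expectation that $Z$ splits; it is false in general. Take $\mathfrak C=[X/E]=[G/F]$ with $0\to E\to F\to G\to 0$ a non-split sequence of vector bundles, so $C_1=X$, $T_1=E$, $C_2=G$, $T_2=F$. Then $Z=F$, and $Z\to C_2$ is that non-split extension, so $Z\not\cong C_2\times_X T_1$. Your deformation step is therefore the actual argument, not a fallback, and it does work:
\begin{itemize}
\item Write $C=\Spec S$ and $F=\Spec\mathrm{Sym}\,\mathcal F$. An exact sequence $0\to F\to C'\to C\to 0$ is recovered from the sheaf extension $0\to S^1\to S'^1\to\mathcal F\to 0$ as $C'=C\times_{\Spec\mathrm{Sym}\,S^1}\Spec\mathrm{Sym}\,S'^1$.
\item Push this extension out along multiplication by $t$ on $X\times\AA^1$. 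This gives a family of cones that is an $F$-torsor over $C\times\AA^1$, so its projective closure is flat over $\AA^1$. Its fibre is $C'$ at $t=1$ and $C\oplus F$ at $t=0$.
\item Segre classes commute with restriction to these fibres, and $i_0^!=i_1^!$ on $A_*(X\times\AA^1)$, where $i_t$ is the inclusion at $t$. Hence $s(C')=s(C\oplus F)$.
\end{itemize}
This reduces everything to the split identity $s(C\oplus F)=c(F)^{-1}\cap s(C)$. The cost is building and controlling the family, which the paper avoids by quoting the exact-sequence lemma.

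\textbf{The split case needs more than you claim.} It is not true that only flat pullback and projective pushforward are involved. The argument through $\PP(C\oplus 1)\subset\PP(C\oplus F\oplus 1)$ needs
\[
[\PP(C\oplus 1)]=c_f(q^*F\otimes\OO(1))\cap[\PP(C\oplus F\oplus 1)],
\]
that is, that the zero scheme of a regular section computes the top Chern class (Fulton's Example 3.2.16(ii)). This is not immediate in Kresch's theory. It is exactly what the paper supplies in Lemma~\ref{lem: chern class regular section}, via an explicit rational equivalence in the total space of the bundle between the zero section and the preimage of the zero scheme. Your section is regular, since locally it is given by linear coordinates on the $F$-factor. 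With that lemma added your proof goes through; the paper's route needs the same input.
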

\begin{proof}
Let $\fX = [C_1/T_1] = [C_2/T_2]$ be a cone stack on $S$. For $(i, j) = (1, 2)$ and $(2, 1)$, there are Cartesian diagrams (with $Z$ defined as the fiber product)
\[
\begin{tikzcd}
Z \arrow[r] \arrow[d] & C_i \arrow[r] \arrow[d] & \fX \arrow[d, "\delta"] \\
C_i \times_S C_j \arrow[r] & {[C_i \times_S C_j/T_j]} \arrow[r] \arrow[d] & \fX \times_S \fX \arrow[d, "pr_1"]\\
& C_i \arrow[r] & \fX
\end{tikzcd}
\]
realizing $[Z/T_2]$ as a presentation for $C_1$ and $[Z/T_1]$ as a presentation for $C_2$. Here $\delta$ is the diagonal and $pr_1$ is projection to the first factor. The result follows from \cite[Lemma 1.3]{BF} and a computation using Lemma \ref{lemma: properties of segre} (cf.~\cite[Example 4.1.6(c)]{fulton}).
\end{proof}

\noindent
Because of Lemma \ref{lem:independent}, the following definition makes sense.
\begin{definition}
If $\mf C \to X$ is a cone stack with a global presentation $\mf C = [C/T]$, then the \emph{Segre class} of $\mf C$ is
\[
s(\mf C) := c(T) \cap s(C) \quad \quad \quad \in \prod A_*(X).
\]

\end{definition}

\subsection{Segre classes of intrinsic normal cones}

Let $f: X \to Y$ be a $DM$-type morphism of algebraic stacks. We say $f$ has a \emph{global presentation} if its intrinsic normal cone has a global presentation (Section \ref{sec:segre1}).

\begin{example}
Let $f: X \to Y$ be a $DM$-type morphism of algebraic stacks. If there is an algebraic stack $M$ and a factorization $X \to M \to Y$ of $f$ with $X \to M$ a closed immersion and $M \to Y$ smooth, then $f$ has a global presentation.
\end{example}

\begin{example}\label{ex:pot}
Let $f: X \to Y$ be a $DM$-type morphism of algebraic stacks. If $\EE \to \LL_{X/Y}$ is a perfect obstruction theory with a global resolution, meaning that $\EE$ is globally isomorphic to a 2-term complex of vector bundles on $X$, then $f$ has a global presentation. We note that by \cite[Lemma 5.0.3]{CJW} this implies many structure morphisms from moduli spaces of maps to moduli spaces of curves have a global presentation.
\end{example}

Let $f: X \to Y$ be a morphism of global presentation and let $(V, E, T)_Y$ be a cycle on $Y$ with morphisms $i_V: V \to E, p_E: E \to T$, and $g_T: T \to Y$. Let $X_T = X \times_{Y} T$, $X_E = X \times_{Y} E$,
        and $X_V = X \times_{Y} V$, with morphisms $i_{X_V}: X_V \to X_E$, $p_{X_E}: X_E \to X_T$, and $g_{X_T}: X_T \to X$. By \cite[Proposition 2.26]{manolache-pullback} we have a closed embedding $C_{X_V} V \to C_X Y$, and from Remark \ref{rmk:present} we have that $C_{X_V} V$ has a global presentation.

\begin{definition}\label{def:segre}

        The \emph{Segre class of $(f; V, E, T)$} is
\[
s(f; V, E, T) := g_{X_T*} (p_{X_E}^*)^{-1} i_{X_V*} s(C_{X_V} V) \quad \quad \quad \in \prod A_*(X).
\]

\end{definition}

\begin{remark}
We do not claim that the mapping $(V, E, T)_Y \mapsto s(f; V, E, T)$ induces a morphism of Chow groups.
\end{remark}

\begin{example}
Let $f: X \to Y$ be a morphism of global presentation. An important special case of Definition \ref{def:segre} is when the cycle $(V, E, T)_Y$ is equal to $(Y, Y, Y)_Y$, namely the fundamental class of $Y.$ In this case,
\[
s(f; Y, Y, Y) = s(C_X Y)
\]
is Fulton's canonical class (\cite[Example 4.2.6]{fulton} and \cite[Definition 4.3]{siebert}).
\end{example}

\subsection{Siebert's formula for virtual pullbacks}

Let $f: X \to Y$ be a DM-type morphism. We say a perfect obstruction theory $\EE \to \LL_{X/Y}$ for $f$ has a \emph{global resolution} if $\EE$ is globally isomorphic to a 2-term complex $[\mls E^{-1} \to \mls E^0]$ of locally free sheaves on $X$. In this case, by Example \ref{ex:pot}, for any cycle $(V, E, T)_Y$ on $Y$ we have a Segre class $s(f; V, E, T)$; moreover the rank of $\EE$ was defined in \cite[Section 5]{BF}. We define $c(\EE^\vee) := c(E^0) \cap c(E^1)^{-1}$ where $E^i = \relSpec_X(Sym (\mls E^{-i}))$.

\begin{theorem}\label{thm:siebert2}
    If $X$ is stratified by global quotient stacks and $\phi \colon \EE \to \LL_{X/Y}$
    is a perfect obstruction theory for $f$ with a global resolution, then
    \[
        f_{\phi}^{!}(\;[(V, E, T)_Y]\;) =
        \left\{ c(\EE^{\vee})^{-1} \cap s(f; V, E, T)\right\}_{\dim(V, E, T) + \rank(\EE)}.
    \]
\end{theorem}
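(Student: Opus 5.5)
We unwind the definition of the virtual pullback and reduce to Fulton's formula for intersecting a cone with the zero section of a vector bundle. By Manolache's construction, $f^!_\phi$ is the composition
\[
A_*(Y) \xrightarrow{\sigma} A_*(C_XY) \to A_*(\mf E) \xrightarrow{0^!_{\mf E}} A_*(X),
\]
where $\mf E = h^1/h^0(\EE^\vee) = [E_1/E_0]$ is the vector bundle stack attached to the global resolution $\EE = [\mls E^{-1}\to \mls E^0]$. Here $E_i = \relSpec_X(Sym(\mls E^{-i}))$, matching the $E^i$ of the definition of $c(\EE^\vee)$. The middle map is pushforward along the closed embedding $C_XY \hookrightarrow \mf E$ induced by $\phi$.

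\textbf{Step 1.} Apply Proposition \ref{prop:sigma-on-cycles}. It says $\sigma([(V,E,T)_Y])$ is the cycle $([C_{X_V}V], C_{X_E}E, C_{X_T}T)_{C_XY}$. By Fact \ref{fact:cycle-is}, and using that $C_{X_E}E = C_{X_T}T\times_{X_T}X_E$ (Lemma \ref{lem:flat-basechange} on the zero fiber), this cycle equals
\[
g_{*}(p^*)^{-1} i_*[C_{X_V}V].
\]
This is the pushforward along the projective map $C_{X_T}T \to C_XY$, after inverting the flat pullback along the vector bundle $C_{X_E}E \to C_{X_T}T$. The embedding $C_{X_V}V \hookrightarrow C_{X_E}E$ is the one from \cite[Prop 2.26]{manolache-pullback}.

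\textbf{Step 2.} Commute $0^!_{\mf E}$ past these operations. Zero-section pullback for a vector bundle stack commutes with projective pushforward and with flat pullback (Kresch, compatibility of Gysin maps). The cone $C_{X_V}V$ lies in $\mf E_{X_T} := \mf E|_{X_T}$ pulled back further to $X_E$. So it suffices to prove the statement for the fundamental class of a single cone stack $\mf C := C_{X_V}V \subset \mf E|_{X_E}$. Everything then follows from naturality of Segre and Chern classes under $g_{X_T*}(p_{X_E}^*)^{-1}i_{X_V*}$, using the projection formula for $c(\EE^\vee)^{-1}$.

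\textbf{Step 3 (key).} Prove the stacky Fulton formula: for a closed cone substack $\mf C\subset [E_1/E_0]$ on a stack $W$ stratified by global quotients, with $\mf C$ pure of dimension $d$,
\[
0^![\mf C] = \{c(E_1)\, c(E_0)^{-1} \cdots\}.
\]
More precisely, it should equal $\{c(\EE^\vee)^{-1}\cap s(\mf C)\}_{d-\rank E_1+\rank E_0}$. To prove it, pull back along the smooth surjection $E_1\to [E_1/E_0]$. Then $C = \mf C\times_{\mf E}E_1$ is a cone in $E_1$ of dimension $d+\rank E_0$, and $0^!_{\mf E}[\mf C] = 0^!_{E_1}[C]$. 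By Remark \ref{rmk:present} this gives the presentation $\mf C=[C/E_0]$. Apply Fulton's formula \cite[Ex 4.1.8]{fulton}, extended to Kresch's groups via projective closures and the formula $s(C)=q_*\sum c_1(\OO(1))^i\cap[\PP(C\oplus1)]$. This gives
\[
0^!_{E_1}[C] = \{c(E_1)\cap s(C)\}_{\dim C-\rank E_1}.
\]
Finally substitute $s(C) = c(E_0)^{-1}\cap s(\mf C)$ from the definition of $s(\mf C)$, which is well defined by Lemma \ref{lem:independent}. Since $c(E_1)c(E_0)^{-1}=c(\EE^\vee)^{-1}$, this yields the result. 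The degree bookkeeping gives $\dim(V,E,T)+\rank\EE$, because the cones in Step 1 have the dimension of $V$.

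\textbf{Main obstacle.} I expect the main difficulty to be Step 3: justifying Fulton's cone formula in Kresch's theory. This needs the Gysin map of $E_1$ restricted to a cone, computed via $\PP(C\oplus1)$, which requires $W$ to be stratified by global quotients so that Kresch's Chern and Segre operations behave as in \cite{AF}. I would first try to reduce to the quotient-stack case by excision on the strata, then follow Fulton's argument verbatim using Kresch's projective bundle formula.
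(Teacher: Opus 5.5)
Your proposal is correct and is essentially the paper's proof. The paper likewise combines Manolache's definition with Proposition~\ref{prop:sigma-on-cycles}, and uses that the cone squares are fibered (flat base change of intrinsic normal cones) together with Fact~\ref{fact:cycle-is} to move the inverse flat pullback down to $X_V$. It then applies \cite[Example 4.1.8]{fulton} to the cone $\widetilde{C}_{X_V}V \subset E^1|_{X_V}$, rewrites $c(E^1)\cap s(\widetilde{C}_{X_V}V)$ as $c(E^1)c(E^0)^{-1}\cap s(C_{X_V}V)$, and pushes forward by $g_{X_T*}(p_{X_E}^*)^{-1}i_{X_V*}$. The only difference is order: the paper pulls back to $E^1$ over all of $X$ first and then commutes with these operations, while you commute first and pull back to $E_1$ afterwards.

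One small caveat: the excision-on-strata reduction you float for Fulton's formula would not by itself prove an equality of classes, since excision only determines a class modulo pushforwards from the closed complement. It is also unnecessary, because Fulton's argument for Example 4.1.8 goes through directly with Kresch's Chern classes and projective bundles, which is all the paper asserts.
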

\begin{remark}
The expression $ c(\EE^\vee)^{-1} \cap s(f; V, E, T)$ makes sense because
$s(f; V, E, T)$ has only finitely many nonzero homogeneous components with positive dimension, and each homogeneous component of the operator $c(\EE^\vee)^{-1}$ only decrease dimension (or leave it unchanged). So each homogeneous component of $ c(\EE^\vee)^{-1} \cap s(f; V, E, T)$ is a finite sum.
\end{remark}

\begin{proof}
Let $st(\EE) = [E^1/E^0]$ be the cone stack associated to $\EE$. In this proof, if $F$ is a cone or cone stack on an algebraic stack $M$, we write $p_F: F \to M$ for the structure morphism. If $F$ is a vector bundle stack and $M$ is stratified by global quotient stacks then by \cite{kresch} the flat pullback has an inverse, denoted $(p_F^*)^{-1}$.
\[
\begin{tikzcd}
\widetilde{C}_{X_V} V \arrow[d] \arrow[r] & \widetilde{C}_{X_E} E \arrow[d] \arrow[r] & \widetilde{C}_{X_T} T \arrow[r]\arrow[d]  & \widetilde{C}_X Y \arrow[r, "\widetilde{i}"] \arrow[d, "\widetilde{\pi}"] & E^1\arrow[d, "\pi"]  \\
{C}_{X_V} V \arrow[r] & {C}_{X_E} E \arrow[r] & {C}_{X_T} T \arrow[r] & {C}_X Y \arrow[r, "i"] & st(\EE).
\end{tikzcd}
\]
From the definition of $f^!_\phi$ in \cite[Construction 3.6]{manolache-pullback}, together with the formula for $\sigma$ in Proposition \ref{prop:sigma-on-cycles}, we have
\[
f^!_\phi(\;[(V, E, T)_Y]\;) = (p_\EE^*)^{-1} i_*(\;[(C_{X_V} V, C_{X_E}E, C_{X_T}T)_{C_X Y}]\;).
\]
From compatibility of flat pullbacks and closed embeddings, we have
\[(p_\EE^*)^{-1}i_* = (p_{E^1}^*)^{-1} \pi^*i_* = (p_{E_1}^*)^{-1} \widetilde{i}_*\widetilde{\pi}^*,\]
and hence
\[
f^!_\phi(\;[(V, E, T)_Y]\;) = (p_{E_1}^*)^{-1} \widetilde{i}_*(\;[(\widetilde{C}_{X_V} V, \widetilde{C}_{X_E}E, \widetilde{C}_{X_T}T)_{\widetilde{C}_X Y}]\;).
\]
The right hand side is equal to $(p_{E_1}^*)^{-1} (\;[(\widetilde{C}_{X_V} V, \widetilde{C}_{X_E}E, \widetilde{C}_{X_T}T)_{E^1}]\;),$ where $\widetilde{C}_{X_T} T$ is viewed as a projective stack over $E^1$ via the composition $\widetilde{C}_{X_T} T \to \widetilde{C}_X Y \xrightarrow{\widetilde{i}} E^1.$ Since formation of intrinsic normal cones commutes with flat pullback, the square with $\widetilde{C}_{X_T} T$, $\widetilde{C}_{X_E}E$, $E^1|_{X_T}$, and $E^1|_{X_E}$ is fibered, and by \cite[Definition 2.1.9]{kresch} we have an equality of classes in $A_*(E^1)$
\[[(\widetilde{C}_{X_V} V, \widetilde{C}_{X_E}E, \widetilde{C}_{X_T}T)_{E^1}]= [(\widetilde{C}_{X_V} V, E^1|_{X_E}, E^1|_{X_T})_{E^1}].\]
We have a fibered diagram
\[
\begin{tikzcd}
\widetilde{C}_{X_V} V \arrow[r] & E^1|_{X_V} \arrow[d] \arrow[r, "\widetilde{i}"] & E^1|_{X_E} \arrow[d] \arrow[r, "\widetilde{p}"] & E^1|_{X_T} \arrow[d] \arrow[r, "\widetilde{g}"] & E^1 \arrow[d] \\
& X_V \arrow[r, "i_{X_V}"] & X_E \arrow[r, "p_{X_E}"] & X_T \arrow[r, "g_{X_T}"] & X.
\end{tikzcd}
\]
By Fact \ref{fact:cycle-is} we have
\[
f^!_\phi(\;[(V, E, T)_Y]\;) = (p_{E^1}^{\ast})^{-1}\widetilde{g}_*(\widetilde p ^*)^{-1}\widetilde{i}_*[\widetilde C_{X_V} V]
\]
and by compatibility of flat pullbacks and proper pushforwards, this is equal to
\[
g_{X_T*}(p_{X_E}^*)^{-1} i_{X_V*}(p_{E^1|_{X_V}}^*)^{-1}[\widetilde{C}_{X_V} V].
\]
We now apply \cite[Example 4.1.8]{fulton} (it is straightforward to check that this result holds in our setting by the same proof) and compute
\begin{align*}
f^!_\phi(\;[(V, E, T)_Y]\;) &= g_{X_T*}(p_{X_E}^*)^{-1} i_{X_V*}\{c(E^1|_{X_V}) \cap s(\widetilde{C}_{X_V} V)\}_{\dim \widetilde{C}_{X_V} V - \rank(E^1)}\\
&= \{g_{X_T*}(p_{X_E}^*)^{-1} i_{X_V*}c(E^1|_{X_V}) \cap s(\widetilde{C}_{X_V} V)\}_{\dim (V, E, T) + \rank(\mathbb{E})}\\
&= \{(c(E^1) \cup c(E^0)^{-1}) \cap g_{X_T*}(p_{X_E}^*)^{-1} i_{X_V*}( c(E^0|_{X_V}) \cap s(\widetilde{C}_{X_V} V) )\}_{\dim (V, E, T) + \rank(\mathbb{E})}
\end{align*}
Here, the second equality uses $\dim \widetilde{C}_{X_V} V = \dim V + \rank(E^0)$
and the third uses functoriality of Chern classes. The theorem now follows from the definitions of $c(\EE^\vee)^{-1}$ and $s(f; V, E, T).$

\end{proof}

\section{Application to Gromov-Witten theory}
In this section we use Theorem \ref{thm:siebert2} to (re)prove the Splitting Axiom (\cite[Def 7.1(3)]{BM96} and \cite[Axiom III]{BehrendGW}).

\subsection{Moduli of sections}\label{sec:setup}

Let $ \fM$ be a locally  Noetherian algebraic stack, let $C \to \fM$ be a family of tame twisted curves as in \cite[Def 2.1]{AOV}, and let $ Z \to C$ be a morphism of algebraic stacks such that $Z \to  \fM$ is locally of finite presentation, quasi-separated, and has affine stabilizers. By \cite[Thm 1.3]{HR19} there is an algebraic stack $S_\fM(Z/C)$ over $\fM$ whose groupoid fiber over a scheme $T \to \fM$ is $\Hom_C (C \times_\fM T, Z)$. By \cite[Thm 1.1]{webb} this stack carries a canonical obstruction theory, defined as follows.

Let $\pi: C_S \to S_\fM(Z/C)$ be the universal curve and let $f: C_S \to Z$ be the universal section. There is a relative dualizing complex $\omega^\bullet_{C_S/S} = \omega_{C_S/S}[1]$ for the family $C_S \to S_\fM(Z/C)$ and a trace map $tr_{C_S/S}: R\pi_*\omega^\bullet_{C_S/S} \to \mls O_{S_\fM(Z/C)}$, and these are compatible with base change  \cite[Prop 3.14]{webb}.  Let $\Dqc(S)$ be the full subcategory of the unbounded derived category of $\mls O_S$-modules in the lisse-\'etale topology on objects with quasi-coherent cohomology sheaves.

\begin{definition}\label{def:a}
Define $\pi_!$ to be the functor $R\pi_*(- \otimes \omega_{C_S/S}^\bullet)$. Define the \emph{adjunction-like map} $a_\pi$ from $\Hom_{\Dqc(C_S)}(\mls F, \pi^*\mls G)$ to $\Hom_{\Dqc(S)}(\pi_! \mls F, \mls G)$ to send $\mls F \to \pi^*\mls G$ to the composition
\[
R\pi_*(\mls F \otimes \omega_{C_S/S}^\bullet) \to R\pi_*(\pi^*\mls G \otimes \omega_{C_S/S}^\bullet) \xleftarrow{\sim} \mls G \otimes R\pi_*(\omega_{C_S/S}^\bullet) \xrightarrow{tr_{C_S/S}} \mls G \otimes \mls O_S \xrightarrow{\sim} \mls G
\]
where the backwards isomorphism is the projection formula (cf. \cite[Section 2.3]{webb}).
\end{definition}

\begin{remark}\label{rmk:why-glue}
When $S$ is a separated Noetherian tame algebraic stack, the dualizing complex $\omega^\bullet_{C_S/S}$ is equal to $\pi^! \mls O_S$ where $\pi^!$ is right adjoint to $R\pi_*$ and $tr_{C_S/S}$ is the counit of the $(R\pi_*,  \pi^!)$ adjunction. Moreover $\pi_!$ is a genuine left adjoint to $\pi^*$ and $a_\pi$ is the adjunction bijection \cite[Example 2.11]{webb}.  In general we do not know if these identifications hold because we do not know if the right adjoint to $R\pi_*$ is compatible with base change: the base change result \cite[Lem 3.7]{webb} requires the derived category $\Dqc(S)$ to be compactly generated. If $S$ is for example a moduli stack of twisted stable maps to a tame Artin stack over a field of positive characteristic, then we don't know if this hypothesis holds for $S$.
\end{remark}

There are canonical morphisms of cotangent complexes
\begin{equation}\label{eq:OT1}
f^*\LL_{Z/C}  \to \LL_{C_S/C} \xleftarrow{\sim} \pi^*\LL_{S_\fM(Z/C)/\fM}.
\end{equation}
Applying the adjunction-like morphism $a_\pi$ to \eqref{eq:OT1} yields the morphism
\begin{equation}\label{eq:OT2}
\phi_{S/\fM}\;: \quad  \quad \EE_{S/\fM} := R\pi_*(f^*\LL_{Z/C} \otimes  \omega^\bullet_{C_S/S}) \to \LL_{S_\fM(Z/C)/\fM}.
\end{equation}
By \cite[Thm 1.1]{webb} the morphism \eqref{eq:OT2} is a relative obstruction theory for the morphism $S_\fM(Z/C) \to \fM$.

\subsection{Relative duality for morphisms of curves}

To state our functoriality lemma, we need the following result (cf. \cite[Prop 3.14]{webb}).

\begin{lemma}\label{lem:rel-duality}
Let $M$ be a locally Noetherian algebraic stack, and let $Q: C_M \to D_M$  be a morphism of tame twisted curves over $M$ such that $R^1Q_*\omega_{C_M/M} = 0$. There is a line  bundle $\nu_{C_M/D_M}$ on $C_M$ and a morphism $tr_{{C_M/D_M}}: RQ_*\nu_{C_M/D_M} \to \mls O_{D_M}$ such that
\begin{itemize}
\item[(i)] The pair is functorial in the following sense: given a \Rachel{flat}  morphism $m: N\to M$ of locally Noetherian algebraic stacks, with $C_N = C_M \times_M N$ and $D_N = D_M \times_M  N$ and $m': C_N \to C_M$, $m'': D_N \to D_M$, and $Q_N: C_N \to D_N$  the canonical maps, there is an isomorphism $m'^* \nu_{C_M/D_M} \to \nu_{C_N/D_N}$ functorial in $N \to M$ such that the square
\[
\begin{tikzcd}[column sep = 40]
Lm''^*RQ_*\nu_{C_M/D_M} \arrow[r, "{Lm''^* tr_{C_M/D_M}}"] \arrow[d, "\sim"'] & \mls O_{D_N}\\
RQ_{N*} Lm'^*\nu_{C_M/D_M} \arrow[r, "\sim"] & RQ_{N*} \nu_{C_N/D_N} \arrow[u, "tr_{C_N/D_N}"']
\end{tikzcd}
\]
commutes, where the left vertical arrow is the base change isomorphism and the bottom is induced by the isomorphism $m'^* \nu_{C_M/D_M} \to \nu_{C_N/D_N}$.

\item[(ii)] If $M$ is a quasi-separated Noetherian algebraic space, then $\nu_{C_M/D_M} = Q^! \mls O_{D_M}$ and $tr_{C_M/D_M}$ is the counit of the $(RQ_*, Q^!)$ adjunction. (This adjoint exists by  e.g. \cite[Thm 1.3]{BDS}.)
\end{itemize}
\end{lemma}
\begin{proof}
The proof of \cite[Prop 3.14]{webb} works here. There are two things to check.
First, we need to know that when $N \to M$ is a flat morphism of quasi-separated Noetherian algebraic spaces, the canonical map $m'^*\nu_{C_M/D_M} \to \nu_{C_N/D_N}$ is an isomorphism. For this we use \cite[Lemma 0.1]{neeman}, noting that $Q$ is concentrated (by \cite[Lemma~3.3(2)]{webb} and
\cite[Lemma~2.5(4)]{hall-rydh}) and proper.
Second, we must check that the Ext vanishing required for gluing the trace maps still holds. Observe that $\nu_{C_M/D_M}$ is equal to $\omega_{C_M/M} \otimes Q^*\omega_{D_M/M}^{-1},$ so $R^1Q_*\nu_{C_M/D_M}$ vanishes because $R^1Q_*\omega_{C_M/M}$ does by hypothesis.
\end{proof}

Mimicking Definition \ref{def:a}, we define a functor $Q_!:= RQ_*(- \otimes \nu_{C_M/D_M})$ and an adjunction-like map $a_Q$ from $\Hom_{\Dqc(C_M)}(\mls F, LQ^* \mls G)$ to $\Hom_{\Dqc(D_M)}(Q_!\mls F, \mls G)$: explicitly, this map is to apply $Q_!$ and then compose with the projection isomorphism and $tr_{C_M/D_M}$.

\begin{remark}\label{rmk:why-glue2} As in Remark \ref{rmk:why-glue},
if $M$ is a separated Noetherian tame algebraic stack, one can construct $\nu_{C_M/D_M}$ and $tr_{C_M/D_M}$ globally, via adjunction, rather than using the gluing approach of Lemma \ref{lem:rel-duality}. By \cite[Example 2.11]{webb} the functor $Q_!$ and the adjunction-like map $a_Q$ will agree with the analogous data arising from adjunction.
\end{remark}

\subsection{Functoriality in the curve}

The goal of this section is to prove a functoriality statement for \eqref{eq:OT2} ``in the curve'' $C$. Our setup is the following.

Let $\fM$ and $\mf N$ be locally Noetherian algebraic stacks and let $D \to \mf N$, $C \to \fM$ be two families of tame twisted curves. Let $\fp: \mf N \to \mf M$ be a morphism, let $C_{\mf N} = C \times_{\mf M} \mf N$, and let $Q: C_{\mf N} \to D$ be a morphism of stacks over $\mf N$.
Let $Z_D \to D$ and $Z_C \to C$ be flat morphisms of algebraic stacks satisfying the conditions of Section \ref{sec:setup}, and assume the pullbacks $Z_C \times_C C_{\mf N}$ and $Z_D \times_D C_{\mf N}$ are equal. (For an example of how to apply this setup, see Section \ref{sec:splitting}.) Let $M = S_\fM(Z_C/C)$ and $N = S_{\mf N}(Z_D/D)$ be the respective moduli of sections, with universal curves
\[
C_{M} = C \times_{\fM} M \quad \quad \text{and} \quad \quad D_N = D  \times_{\mf N} N
\]
having projection morphisms $\pi_M: C_{M} \to  M$ and $\pi_N: D_N \to  N$ and universal sections $f_M: C_M \to Z_C$ and $f_N: D_N \to Z_N$, respectively. Let $C_N = C_{\mf N} \times_{\mf N} N$, and let $p: N \to M$ be the morphism induced by the composition $C_N \to D_N \xrightarrow{f_D} Z_D$. These spaces and morphisms are summarized in the following diagram.
\[\begin{tikzcd}[row sep=10]
& Z_C && W && Z_D \\
& C && C_{\mf N} && D \\
C_M && C_N && D_N \\
& \mf M && \mf N \\
M && N
\arrow[from=1-2, to=1-4]
\arrow[from=1-2, to=2-2]
\arrow[from=1-4, to=1-6]
\arrow[from=1-4, to=2-4]
\arrow[from=1-6, to=2-6]
\arrow[from=2-2, to=4-2]
\arrow["{{\mathfrak{p}_C}}"'{pos=0.7}, from=2-4, to=2-2]
\arrow["Q"{pos=0.3}, from=2-4, to=2-6]
\arrow[curve={height=-12pt}, from=2-6, to=4-4]
\arrow["f_C"{pos=0.7}, curve={height=-12pt}, from=3-1, to=1-2]
\arrow[from=3-1, to=2-2]
\arrow["{\pi_{C/M}}"'{pos=0.8}, from=3-1, to=5-1]
\arrow["f"{pos=0.7}, curve={height=-12pt}, from=3-3, to=1-4, crossing over]
\arrow[from=3-3, to=2-4]
\arrow["{p_{C}}"{pos=0.3}, from=3-3, to=3-1]
\arrow["f_D"{pos=0.7}, curve={height=-12pt}, from=3-5, to=1-6, crossing over]
\arrow[from=3-5, to=2-6]
\arrow["{\pi_{D/N}}", curve={height=-12pt}, from=3-5, to=5-3, crossing over]
\arrow["{\mathfrak{p}}"'{pos=0.8}, from=4-4, to=4-2]
    \arrow["{\pi_{C/N}}"'{pos=0.8}, from=3-3, to=5-3, crossing over]
\arrow[from=4-4, to=2-4]
\arrow[from=5-1, to=4-2]
\arrow[from=5-3, to=4-4]
\arrow["p", from=5-3, to=5-1]
    \arrow["Q_N"{pos=0.8}, from=3-3, to=3-5, crossing over]
\end{tikzcd}\]

\begin{lemma}\label{lem:key-lemma}
Assume $R^1Q_*\omega_{C_\mf N/\mf N} = 0,$ and let $\mathbb{K}$ denote the mapping cone of $\mls O_{D_N} \to RQ_{N*} \mls O_{C_N}$ with $\mathbb{K}^\vee := R\Hom(\mathbb{K}, \mls O_{D_N})$. There is a commuting square
\begin{equation}\label{eq:key-lemma}
\begin{tikzcd}
Lp^*\EE_{M/\mf M} \arrow[r] \arrow[d, "Lp^*\phi_{M/\mf M}"'] & \EE_{N/\mf N} \arrow[d, "\phi_{N/\mf N}"] \\
Lp^* \LL_{M/\mf M} \arrow[r] & \LL_{N/\mf N}
\end{tikzcd}
\end{equation}
where the bottom morphism is the canonical morphism of cotangent complexes and the
mapping cone of the top arrow is
\[
R\pi_{D/N*}(Lf_D^*\LL_{Z_D/D} \otimes \omega^\bullet_{D/N} \otimes \mathbb{K}^\vee[1]).
\]
\end{lemma}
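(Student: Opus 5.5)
We will build the top arrow of \eqref{eq:key-lemma} by factoring it through the intermediate curve $C_N$, and then check commutativity by unwinding the adjunction-like maps $a_{\pi}$ of Definition~\ref{def:a} and $a_Q$ from Lemma~\ref{lem:rel-duality}. Write $W := Z_C\times_C C_{\mf N} = Z_D \times_D C_{\mf N}$. By flat base change (compatibility of the dualizing complex and trace with base change, \cite[Prop 3.14]{webb}) we have
\[
Lp^*\EE_{M/\fM} \simeq R\pi_{C/N*}(Lf^*\LL_{W/C_{\mf N}}\otimes \omega^\bullet_{C_N/N}),
\]
using $p_C^*f_C^*\LL_{Z_C/C}\simeq f^*\LL_{W/C_{\mf N}}$. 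Flatness of $Z_C\to C$ is what makes the cotangent complex commute with this base change. Similarly $Lf^*\LL_{W/C_{\mf N}} \simeq LQ_N^*Lf_D^*\LL_{Z_D/D}$.

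Next we use the decomposition $\omega^\bullet_{C_N/N}\simeq \nu_{C_N/D_N}\otimes LQ_N^*\omega^\bullet_{D_N/N}$ from the proof of Lemma~\ref{lem:rel-duality}, together with the projection formula. This gives
\[
Lp^*\EE_{M/\fM} \simeq R\pi_{D/N*}\bigl(Lf_D^*\LL_{Z_D/D}\otimes\omega^\bullet_{D_N/N}\otimes RQ_{N*}\nu_{C_N/D_N}\bigr).
\]
We then take the top arrow to be $R\pi_{D/N*}$ of $\id\otimes tr_{C_N/D_N}$, where $tr_{C_N/D_N}:RQ_{N*}\nu\to\mls O_{D_N}$. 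The mapping cone of the top arrow is therefore computed by the cone of $tr_{C_N/D_N}$.

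To identify that cone we use the following facts. Under Grothendieck duality for the finite-type proper morphism $Q_N$, part (ii) of Lemma~\ref{lem:rel-duality} (locally, glued using (i)) identifies $tr$ with the dual of the unit $\mls O_{D_N}\to RQ_{N*}\mls O_{C_N}$. That is,
\[
RQ_{N*}\nu \simeq R\mathcal{H}om(RQ_{N*}\mls O_{C_N},\mls O_{D_N}).
\]
The cone of the dual map is $\mathbb K^\vee[1]$, since dualizing the triangle $\mls O\to RQ_*\mls O\to\mathbb K$ gives $\mathbb K^\vee\to (RQ_*\mls O)^\vee\to\mls O\to\mathbb K^\vee[1]$. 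Plugging this in yields the stated cone.

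For commutativity, we express both vertical arrows via $a_{\pi_{C/N}}$ and $a_{\pi_{D/N}}$. The key compatibility is $a_{\pi_{C/N}} = a_{\pi_{D/N}}\circ a_{Q_N}$, composed appropriately: transitivity of traces, $tr_{C_N/N} = tr_{D_N/N}\circ R\pi_{D/N*}(tr_{C_N/D_N}\otimes\omega^\bullet_{D_N/N})$. Granting that, the square reduces to naturality of the maps of cotangent complexes
\[
LQ_N^*Lf_D^*\LL_{Z_D/D}\to \LL_{D_N/D}\to \LL_{C_N/C_{\mf N}} \leftarrow \pi^*\LL_{N/\mf N},
\]
versus the pullback of \eqref{eq:OT1} along $p_C$, followed by $Lp^*\LL_{M/\fM}\to\LL_{N/\mf N}$. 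This follows from functoriality of cotangent complexes applied to the commutative diagram relating $C_N\to W\to C_{\mf N}$ and $C_N\to D_N\to Z_D$.

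The main obstacle is the transitivity of traces and the duality identification of $tr_{C_N/D_N}$. These are only defined by gluing (Remarks~\ref{rmk:why-glue}, \ref{rmk:why-glue2}), so they must be checked smooth-locally on $N$ where $N$ is a Noetherian scheme and genuine adjoints exist. Since the relevant Hom groups have vanishing negative Ext (this is where $R^1Q_*\omega=0$ is used), the local equalities descend. We would first verify this for affine $N$ via the composite adjunction $(\pi_{C/N})^! = Q_N^!\pi_{D/N}^!$, and then descend.
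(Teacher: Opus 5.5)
Your proposal is correct and follows essentially the same route as the paper. The top arrow is $\pi_{D/N!}$ applied to the projection-formula isomorphism followed by $tr_{C_N/D_N}$, and commutativity comes from applying $a_{Q_N}$ and $a_{\pi_{D/N}}$ to the diagram of cotangent complexes on $C_N$, using $a_{\pi_{C/N}} = a_{\pi_{D/N}}\circ a_{Q_N}$ checked over an affine base where genuine adjoints exist. The cone comes from identifying $tr_{C_N/D_N}$ with the dual of the unit $\mls O_{D_N}\to RQ_{N*}\mls O_{C_N}$ and dualizing the triangle defining $\mathbb{K}$; the one step you assert rather than prove, that this trace is the dual of the unit, is what the paper establishes via a globally defined comparison map checked locally with Proposition~\ref{prop:unit-counit}.
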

\begin{proof}
The assumption that $R^1Q_*\omega_{C_\mf N/\mf N} = 0$ is needed so that we can define $(\nu_{C_{\mf N}/D}, tr_{C_{\mf N}/D})$ using Lemma \ref{lem:rel-duality}.
To construct \eqref{eq:key-lemma}, start with the commuting diagram of canonical morphisms of cotangent complexes
\[
\begin{tikzcd}[row sep=10]
Lp_C^*Lf_C^*\LL_{Z_C/C} \arrow[d] \arrow[r, "\sim"] & Lf^*\LL_{W/C_\mf N} \arrow[d] &\arrow[l, "\sim"'] LQ_N^*Lf_D^*\LL_{Z_D/D} \arrow[d] \\
Lp_C^*\LL_{C_M/C} \arrow[r] & \LL_{C_N/C_{\mf N}} & LQ_N^* \LL_{D_N/D} \arrow[l, "\sim"']\\
Lp_C^*\pi_{C/M}^*\LL_{M/\fM} \arrow[u, "\sim"] \arrow[rr] \arrow[d, equal]&& LQ_N^*\pi_{D/N}^* \LL_{N/\mf N} \arrow[u, "\sim"'] \arrow[d, equal]\\
LQ_N^*\pi_{D/N}^*Lp^*\LL_{M/\mf M} \arrow[rr] && LQ_N^*\pi_{D/N}^* \LL_{N/\mf N}
\end{tikzcd}
\]
The top horizontal arrows are isomorphisms since $Z_C \to C$ and $Z_D \to D$ are flat, and the remaining horizontal arrow is an isomorphism since both $C_{\mf N}$ and $D$ are flat over $\mf N$. Observe that this is a diagram of complexes on $C_N$.
Applying the adjunction-like maps $a_{Q_N}$ and then $a_{\pi_{D/N}}$ to the vertical columns yields the (commuting) square in the diagram
\begin{equation}\label{eq:key-lemma2}
\begin{tikzcd}[column sep=20]
\pi_{D/N!} Q_{N!} Lp_C^*Lf_C^*\LL_{Z_C/C} \arrow[d] \arrow[r, "\sim"] & \pi_{D/N!} Q_{N!} LQ_N^* Lf_D^*\LL_{Z_D/D} \arrow[d] \arrow[r] & \pi_{D/N!} Lf_D^*\LL_{Z_D/D} \arrow[dl]\\
Lp^*\LL_{M/\mf M} \arrow[r] & \LL_{N/\mf N}.
\end{tikzcd}
\end{equation}
Recalling that $Q_{N!}$ is the functor $RQ_{N*}(- \otimes \nu_{C_N/D_N})$, we define the top  right horizontal arrow to be induced by a projection formula isomorphism $RQ_{N*}(LQ_N^*Lf_D^*\LL_{Z_D/D} \otimes \nu_{C_N/D_N}) \simeq Lf_D^*\LL_{Z_D/D} \otimes Q_{N*}\nu_{C_N/D_N}$ and $tr_{C_N/D_N}$.
The diagonal arrow is $\phi_{N/\mf N}$, and the left vertical arrow is isomorphic to $Lp^*\phi_{M/\mf M}$ by compatibility of the adjunction-like maps $a_{\pi_{C/M}}$ and $a_{\pi_{C/N}}$ and base change \cite[Lem 2.13]{webb}. So \eqref{eq:key-lemma2} is isomorphic to the desired diagram \eqref{eq:key-lemma}.

To show that the triangle in \eqref{eq:key-lemma2} commutes,
we can work locally on $N$, and hence replace $N$ with an affine scheme. In this case, by Remarks \ref{rmk:why-glue} and \ref{rmk:why-glue2} the dualizing sheaves $\omega_{C_N/N},$ $ \omega_{D_N/N},$ $ \nu_{C_N/D_N}$ and associated trace maps, lower shriek functors, and adjunction-like maps all agree with the analogous objects defined using adjunction. So commutativity of the triangle follows from the statement that the composition of adjoints of $Q_N^*$ and $\pi_{D/N}^*$ is adjoint to the composition $ Q_N^* \circ \pi_{D/N}^* = \pi_{C/N}^*$. This completes the proof that \eqref{eq:key-lemma} commutes.

It remains to compute the mapping cone of the top arrow. By assumption we have a distinguished triangle
\[
\mls O_{D_N} \to RQ_{N*} \mls O_{C_N} \to \mathbb{K} \xrightarrow{+1}.
\]
Dualizing gives another distinguished triangle
\begin{equation}\label{eq:dt}
R\Hom(\mathbb{K}, \mls O_{D_N}) \to R\Hom(RQ_{N*} \mls O_{C_N}, \mls O_{D_N}) \to \mls O_{D_N} \xrightarrow{+1}
\end{equation}
where $R\Hom$ denotes internal hom for $\Dqc(D_N)$. We claim the arrow $R\Hom(RQ_{N*} \mls O_{C_N}, \mls O_{D_N}) \to \mls O_{D_N}$ is isomorphic to $tr_{{C_N/D_N}}:\nu_{C_N/D_N} \to  \mls O_{D_N}$. Globally, the isomorphism is given by a commuting diagram
\begin{equation}\label{eq:key-lemma3}
\begin{tikzcd}
R\Hom(RQ_{N*} \mls O_{C_N}, \mls O_{D_N}) \arrow[r]  &  R\Hom(\mls O_{D_N}, \mls O_{D_N})\\
RQ_{N*} R\Hom(\mls O_{C_N}, \nu_{C_N/D_N}) \arrow[u, "\sim"] &  \arrow[l, "\sim"'] R\Hom(\mls O_{D_N}, RQ_{N*} \nu_{C_N/D_N}) \arrow[u, "tr_{C_N/D_N}"]
\end{tikzcd}
\end{equation}
where the left vertical arrow is defined  using the trace map as in \cite[Prop 4.4]{FHM} and the bottom horizontal arrow is \cite[(3.4)]{FHM}. We can check locally that this diagram commutes (and that the left vertical arrow is an isomorphism); that is, we can check these things after replacing $N$ by an affine scheme. In this case all dualizing sheaves and trace maps arise formally from adjunction, and we can apply Proposition \ref{prop:unit-counit}. The internal adjunction isomorphisms in \eqref{eq:key-lemma3}  and \eqref{eq:key-lemma4} agree by \cite[Prop 4.4]{FHM} and \cite[111]{FHM}.
Tensoring the triangle \eqref{eq:dt} with $Lf_D^*\LL_{Z_D/D} \otimes \omega^\bullet_{D/N}$ and applying $R\pi_{D/N*}$ yields the desired triangle
\[
\pi_{D/N!} Q_{N!} LQ_N^* Lf_D^* \LL_{Z_D/D} \to \pi_{D/N!} Lf_D^* \LL_{Z_D/D} \to R\pi_{D/N*}(Lf_D^* \LL_{Z_D/D} \otimes \omega^\bullet_{D/N} \otimes \mathbb{K}^\vee[1]) \xrightarrow{+1}.
\]
\end{proof}

\subsection{Application to the splitting axiom}\label{sec:splitting}

In this section we  work over the complex numbers.
Let $X$ be a smooth complex projective variety. Let $\tau, \sigma$ be $H_2(X)^+$-marked stable graphs (\cite[Defs 1.6, 1.9]{BM96})
such that $\tau$ is obtained from $\sigma$ by splitting an edge,
and let $\mf{M}_{\tau}$ and
$\mf{M}_{\sigma}$ be the associated moduli stacks of prestable marked curves with universal curves $C_\tau \to \mf M_\tau$ and $C_\sigma \to \mf M_\sigma$, respectively. The moduli stacks $\mf{M}_{\tau}$,
$\mf{M}_{\sigma}$ of prestable marked curves are isomorphic, so we denote both stacks by $\mf M$ (but $C_\tau$ and $C_\sigma$ are not isomorphic). Let $\mc M(X, \sigma)$ and $\mc M(X, \tau)$ be the moduli stacks of stable maps to $X$ of type $\sigma$ and $\tau$, respectively. There is a cartesian diagram
\[
    \begin{tikzcd}[ampersand replacement = \&]
        \mc{M}(X, \sigma) \arrow[r, "p"] \arrow[d, ] \& \mc{M}(X, \tau) \arrow[d] \\
        \mf{M} \times X \arrow[r, "{\id \times \Delta}"] \& \mf{M} \times X \times X.
    \end{tikzcd}
\]
where vertical arrows are induced by evaluation at the markings corresponding to the (cut) edge in $\sigma$ and $\tau$, the top horizontal arrow sends the universal stable map $C_\sigma \to X$ to the composition $C_\tau \to C_\sigma \to X$, and the bottom horizontal arrow is induced by the diagonal $\Delta: X \to X \times X$.

\begin{theorem}[Splitting Axiom, {\cite[Def 7.1(3)]{BM96} and \cite[Axiom III]{BehrendGW}}]
There is an equality
\[
[\mc M(X, \sigma)]^{\vir} = (\id \times \Delta)^! [\mc M(X, \tau)]^{\vir}.
\]
\end{theorem}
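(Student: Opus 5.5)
Both virtual classes are virtual pullbacks of $[\mf M]$. We have $[\mc M(X,\sigma)]^{\vir} = (f_\sigma)^!_{\phi_\sigma}[\mf M]$ and $[\mc M(X,\tau)]^{\vir} = (f_\tau)^!_{\phi_\tau}[\mf M]$, where $f_\sigma,f_\tau$ are the structure maps to $\mf M$ and $\phi_\sigma,\phi_\tau$ are the obstruction theories \eqref{eq:OT2} for the moduli of sections of $X\times C_\sigma\to C_\sigma$ and $X\times C_\tau\to C_\tau$. By Manolache's functoriality of virtual pullbacks, the right-hand side becomes
\[
(\id\times\Delta)^!\,(f_\tau)^!_{\phi_\tau}[\mf M] = (f_\tau\circ p)^!_{\psi}\,\ldots
\]
More usefully, we compare obstruction theories on $\mc M(X,\sigma)$ relative to $\mf M$. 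Let $\EE' := Lp^*\EE_{\tau}\oplus$ (the piece coming from $\id\times\Delta$). Precisely, since $\id\times\Delta$ is a regular embedding with normal bundle pulling back to $T_X$, the composite $p$ followed by $f_\tau$ carries a compatible obstruction theory $\EE'$ fitting in a triangle
\[
Lp^*\EE_\tau\to \EE'\to ev^*\Omega_X[1]\xrightarrow{+1}.
\]
Its virtual pullback equals $(\id\times\Delta)^!(f_\tau)^!_{\phi_\tau}[\mf M]$ by Manolache's functoriality theorem together with the cartesian diagram. It therefore suffices to show $(f_\sigma)^!_{\phi_\sigma}[\mf M] = (f_\sigma)^!_{\psi}[\mf M]$ for $\psi:\EE'\to\LL$. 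Equality is not needed at the level of the maps $\phi$: by Theorem \ref{thm:siebert2}, if both are perfect obstruction theories with global resolutions, the virtual pullback depends only on $c(\EE^\vee)$ and $\rank \EE$. Global resolutions exist by Example \ref{ex:pot} and \cite{CJW}, since the spaces are projective over $\mf M$ locally.

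The first step is to compare $\EE_\sigma$ with $Lp^*\EE_\tau$ via Lemma \ref{lem:key-lemma}. We take $\mf N=\mf M$, $C=C_\tau$, $D=C_\sigma$, $Q:C_\tau\to C_\sigma$ the gluing map, and $Z=X\times(-)$. The gluing map is finite, so $R^1Q_*\omega=0$. Here $\mathbb K = Q_*\mls O_{C_\tau}/\mls O_{C_\sigma}$ is the skyscraper $\mls O$ at the node section. Lemma \ref{lem:key-lemma} then gives a triangle
\[
Lp^*\EE_\tau\to\EE_\sigma\to R\pi_*(f^*\Omega_X\otimes\omega^\bullet\otimes\mathbb K^\vee[1]).
\]
We compute the third term via Grothendieck duality for the node section: $\mathbb K^\vee\otimes\omega^\bullet$ is a skyscraper shifted so that the cone is $ev^*\Omega_X[1]$, with the correct shift and rank ($\rank\EE_\sigma = \rank\EE_\tau - \dim X$).

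From the two triangles, Whitney sum gives
\[
c(\EE_\sigma^\vee) = c(Lp^*\EE_\tau^\vee)\,c(ev^*T_X[-1]) = c(\EE'^\vee),
\]
and the ranks agree. Theorem \ref{thm:siebert2} applied to the cycle $[\mf M]=[(\mf M,\mf M,\mf M)]$ yields the equality. This requires $\mc M(X,\sigma)$ to be stratified by global quotients, which holds since it is DM of finite type with quasi-projective coarse space.

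The main obstacle is the local computation of $\mathbb K^\vee$ and its shifts to identify the cone with $ev^*\Omega_X[1]$ and not some twist. I would check this étale-locally at the node, using that the gluing map is a finite morphism with a two-point-to-one fiber. A secondary point is that Theorem \ref{thm:siebert2} is stated for the fixed cycle, so $\EE'$ must genuinely be a perfect obstruction theory for $f_\sigma$ (Manolache's compatibility construction) to apply it.
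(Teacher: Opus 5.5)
Your route differs from the paper's, and it has a genuine gap at the construction of $\EE'$. You assert that $f_\tau\circ p$ ``carries a compatible obstruction theory'' $\EE'$ sitting in a triangle $Lp^*\EE_\tau\to\EE'\to ev^*\Omega_X[1]$. It would have to be compatible both with $\phi_\tau$ and with the obstruction theory $ev^*\LL_{\id\times\Delta}\to\LL_{\mc M(X,\sigma)/\mc M(X,\tau)}$ induced by the diagonal. The regularity of $\id\times\Delta$ does not give you this.

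Building $\EE'$ amounts to lifting the composite $ev^*\Omega_X\to\LL_{\mc M(X,\sigma)/\mc M(X,\tau)}[-1]\to Lp^*\LL_{\mc M(X,\tau)/\fM}$ through $Lp^*\phi_\tau$. Because $\phi_\tau$ is an obstruction theory, its cone $K$ lies in degrees $\le -2$, so lifts exist locally. Globally, however, the obstruction is a class in $\Hom(ev^*\Omega_X, Lp^*K)$. This group receives contributions such as $H^2(ev^*T_X\otimes\mathcal H^{-2}(Lp^*K))$ and has no formal reason to vanish on $\mc M(X,\sigma)$.

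So you must write down an explicit $\alpha\colon ev^*\Omega_X\to Lp^*\EE_\tau$, e.g.\ dual to restricting $R\pi_*f^*T_X$ to the two marked points being glued. You must then prove that $Lp^*\phi_\tau\circ\alpha$ equals the connecting map. Since $\phi_\tau$ is defined through the adjunction-like map $a_\pi$, this is a compatibility statement of exactly the kind proved in the commutativity half of Lemma \ref{lem:key-lemma}, and it is the real content of the proof. You would also need a $2$-term resolution of $\EE'$; Example \ref{ex:pot} presupposes resolutions and says nothing about a cone you build by hand.

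The paper sidesteps all of this by using the commuting square of Lemma \ref{lem:key-lemma}, which you discarded in favor of its triangle alone.
\begin{itemize}
\item The square plus the mapping cone axiom gives a compatible triple $(\FF,\EE_{\mc M(X,\tau)/\fM},\EE_{\mc M(X,\sigma)/\fM})$, with $\FF=f_n^*\Omega_X[1]$ and some unknown $\phi\colon\FF\to\LL_{\mc M(X,\sigma)/\mc M(X,\tau)}$.
\item Manolache's functoriality then yields $p^!_\phi[\mc M(X,\tau)]^{\vir}=[\mc M(X,\sigma)]^{\vir}$, with $\EE_{\mc M(X,\sigma)/\fM}$ itself on the composite.
\item Theorem \ref{thm:siebert2} is applied to $p$ rather than to $f_\sigma$. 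Both $\phi$ and the diagonal-induced obstruction theory have the same source $\FF$, which trivially has the resolution $[f_n^*\Omega_X\to 0]$, so $p^!_\phi=(\id\times\Delta)^!$.
\end{itemize}

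Your observation that Theorem \ref{thm:siebert2} makes $f^!_\phi$ depend only on $c(\EE^\vee)$ and $\rank\EE$ is correct. Your Whitney-sum comparison would finish the argument once a compatible $\EE'$ is actually constructed. Placing the Siebert step on $p$ makes $\EE'$, the Chern class comparison, and the resolution questions unnecessary.
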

\begin{proof}
We apply Lemma \ref{lem:key-lemma}
with $\mf M = \mf N$ the moduli of prestable $\tau$-marked (or $\sigma$-marked) curves, $C = C_\tau$, $D = C_\sigma$, $Z_C = C_\tau \times X$, $Z_D = C_\sigma \times X$, $M = \mc M(X, \tau)$, and  $N = \mc M(X, \sigma)$. Let let $\pi: C_\sigma \to \mc M(X, \sigma)$ denote the projection, let $f: C_\sigma \to X$ be the universal map, and let $n: \mc M(X, \sigma) \to C_\sigma$ denote the section whose image is the node split by $Q: C_\tau \to C_\sigma$. Note that if $\mls F$ is a quasicoherent sheaf on $C_\tau$ then $R^1Q_*\mls F=0$ since $Q$ is affine. We have a short exact sequence
\[
0 \to \mls O_{C_\sigma} \to Q_{*} \mls O_{C_\tau} \to n_*\mls O_{\mc M(X, \sigma)} \to 0.
\]

The  Lemma gives us a commuting square
\begin{equation}\label{eq:square}
\begin{tikzcd}
Lp^*\EE_{\mc M(X, \tau)/\mf M} \arrow[r] \arrow[d, "Lp^* \phi_{\mc M(X, \tau)/\mf M}"'] & \EE_{\mc M(X, \sigma)/\mf M} \arrow[d, "\phi_{\mc M(X, \sigma)/\mf M}"] \\
Lp^* \LL_{\mc M(X, \tau)/\mf M} \arrow[r] & \LL_{\mc M(X, \sigma)/\mf M}
\end{tikzcd}
\end{equation}
such that the mapping cone of the top horizontal arrow is
\begin{equation}\label{eq:split2}
R\pi_*(f^*\Omega_X \otimes \omega^\bullet_{C_\sigma/\mc M(X, \sigma)} \otimes (n_*\mls O_{\mc M(X, \sigma)})^\vee[1]).\end{equation}
Let $f_n = f \circ  n$. We compute
\begin{align*}
\text{\eqref{eq:split2}} &=R\pi_*R \Hom(\, f^*\Omega_X^\vee \otimes (\omega^\bullet_{C_\sigma/\mc M(X, \sigma)})^\vee,\; (n_*\mls O_{\mc M(X, \sigma)})^\vee[1] \, )\\
&=  R\pi_*R \Hom(\, f^*\Omega_X^\vee \otimes (\omega^\bullet_{C_\sigma/\mc M(X, \sigma)})^\vee \otimes n_*\mls O_{\mc M(X, \sigma)}[-1],\; \mls O_{C_\sigma} \, )\\
&= R\pi_*R \Hom(\, f^*\Omega_X^\vee \otimes  n_*\mls O_{\mc M(X, \sigma)}[-1],\; R\Hom( (\omega^\bullet_{C_\sigma/\mc M(X, \sigma)})^\vee ,\mls O_{C_\sigma}) \, )\\
&= R\pi_*R \Hom(\, f^*\Omega_X^\vee \otimes  n_*\mls O_{\mc M(X, \sigma)}[-1],\; \omega^\bullet_{C_\sigma/\mc M(X, \sigma)} \, )\\
&= (R\pi_*n_*(f_n^*\Omega_X^\vee[-1]))^\vee\\
&= f_n^*\Omega_X[1].
\end{align*}
In order, the equalities use \cite[Tag 08JJ]{stacks-project}, \cite[Tag 08J9]{stacks-project}, \cite[Tag 08J9]{stacks-project}, \cite[Tag 08JJ]{stacks-project}, Grothendieck duality, and the fact that $\pi \circ n$ is the identity.

Let $\FF$ denote the complex $f_n^*\Omega_X[1]$. It  follows from \eqref{eq:square} that there is a compatible triple $(\FF, \, \EE_{\mc M(X, \tau)/\fM}, \,\EE_{\mc M(X, \sigma)/\fM})$ where $\phi: \FF \to \LL_{\mc M(X, \sigma)/\mc M(X, \tau)}$ is induced by the mapping cone axiom. Hence by \cite[Cor 4.9]{manolache-pullback}, we have
\[
p_{\phi}^! [\mc M(X, \tau)]^\vir = [\mc M(X, \sigma)]^{\vir}.
\]
On the other hand, by Theorem \ref{thm:siebert2}, the virtual pullback $p^!_\phi$ is determined by $\FF$ and is independent of $\phi$. The cone stack associated to $\FF$ is precisely the pullback of the normal cone to $X \to X \times X$ along the nodal section,
so we have $p_\phi^! = (\mathrm{id} \times \Delta)^!$.

\end{proof}

\appendix
\section{Dualizing the trace map}

Let $\mls C$ and $\mls D$ be closed symmetric monoidal categories, with $f^*: \mls D \to \mls C$ a strong symmetric monoidal functor and $f_*$ its right adjoint. Assume we have also a right adjoint $f^!$ to $f_*$ and that the projection formula holds, meaning that the adjoint of $f^*(x \otimes  f_*(y)) \simeq f^*(x) \otimes f^*f_*(y) \to f^*(x) \otimes y$ is an isomorphism. Let $\mls O_{\mls D}$ denote the unit object of $\mls D$ and let $\Hom_{\mls D}(-, -)$ denote internal Hom.
Proposition \ref{prop:unit-counit} says that the dual of the unit $\eta: \mls O_{\mls D} \to f_*f^* \mls O_{\mls D}$  is canonically identified with the counit $\epsilon: f_*f^! \mls O_{\mls D} \to \mls O_{\mls D}.$

\begin{proposition}\label{prop:unit-counit}
There is a commuting diagram
\begin{equation}\label{eq:key-lemma4}
\begin{tikzcd}
\Hom_{\mls D}(f_*f^*\mls O_{\mls D},  \mls O_{\mls D}) \arrow[r, "- \circ \eta"] \arrow[d, leftrightarrow, "\sim"']  & \Hom_{\mls D}(\mls O_{\mls D}, \mls O_{\mls D})\\
f_*\Hom_{\mls D}(f^*\mls O_{\mls D, f^! \mls  O_{\mls D}}) \arrow[r, leftrightarrow, "\sim"] & \Hom_{\mls D}(\mls O_{\mls D}, f_*f^!\mls O_{\mls D}) \arrow[u, "\epsilon \circ -"']
\end{tikzcd}
\end{equation}
where the left isomorphism is \cite[(2.18)]{BDS} and the bottom one  is \cite[(2.17)]{BDS}.
\end{proposition}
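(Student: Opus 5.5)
The plan is to reduce the statement to the triangle identities for the two adjunctions $f^* \dashv f_*$ and $f_* \dashv f^!$, after making the two unlabeled isomorphisms explicit. \emph{Hom} will mean the external set of morphisms; internal statements then follow by applying the same argument after tensoring with a test object $t$ and using the Yoneda lemma.

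First, the bottom isomorphism \cite[(2.17)]{BDS} is the internal adjunction
\[
f_*\Hom_{\mls C}(f^*x, y) \simeq \Hom_{\mls D}(x, f_*y),
\]
and I will take $x=\mls O_{\mls D}$, $y=f^!\mls O_{\mls D}$. The left isomorphism \cite[(2.18)]{BDS} is the internal form of $f_*\dashv f^!$, namely
\[
f_*\Hom_{\mls C}(u, f^!v) \simeq \Hom_{\mls D}(f_*u, v).
\]
Here I take $u=f^*\mls O_{\mls D}$ and $v=\mls O_{\mls D}$. Concretely, this map sends $\alpha$ to $\epsilon\circ f_*\alpha$, suitably internalized using the lax monoidal structure of $f_*$ and the projection formula. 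To simplify, I will identify $f^*\mls O_{\mls D}=\mls O_{\mls C}$ via strong monoidality.

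Next I chase a generalized element. Take $\alpha\colon t\otimes f^*\mls O_{\mls D}\to f^!\mls O_{\mls D}$, or $\alpha\colon f^*t\to f^!\mls O_{\mls D}$ after adjunction, corresponding to an element of $f_*\Hom_{\mls C}(f^*\mls O_{\mls D}, f^!\mls O_{\mls D})$ over $t$.
\begin{itemize}
\item Going down-right-up, $\alpha$ goes to its adjoint $\alpha^\flat = f_*\alpha\circ\eta_t\colon t\to f_*f^!\mls O_{\mls D}$, and then to $\epsilon\circ f_*\alpha\circ\eta_t$.
\item Going left-up-top, $\alpha$ goes to the map $f_*f^*t\to\mls O_{\mls D}$ given by $\epsilon\circ f_*\alpha$, and precomposing with $\eta$ gives $\epsilon\circ f_*\alpha\circ\eta_t$.
\end{itemize}
These agree by naturality of $\eta$, which is exactly the compatibility of precomposition with $\eta$ and the unit of $f^*\dashv f_*$. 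The real content is that "$-\circ\eta$" in the top row, once internalized, is the map induced by $\eta$ on $f_*f^*(t\otimes\mls O)\simeq t\otimes f_*f^*\mls O$. That identification is where the projection formula enters.

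The main obstacle is bookkeeping: checking that the internal versions of (2.17) and (2.18) in \cite{BDS} are defined by precisely these composites. In particular, (2.18) uses the projection formula isomorphism, and one must check that it is compatible with $\eta$, i.e. that $t\otimes\eta\colon t\to t\otimes f_*f^*\mls O$ corresponds to $\eta_t$ under projection. This compatibility holds because the projection morphism is built from $\eta$ and the monoidal structure maps. I would verify it directly from the definition of the projection morphism as the adjoint of $f^*(t\otimes f_*y)\to f^*t\otimes y$, using the triangle identity. With that in hand, the square commutes by the Yoneda lemma.
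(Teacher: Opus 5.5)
Your proposal is correct and follows essentially the same route as the paper. Both arguments reduce by Yoneda and unwind \cite[(2.17)]{BDS} and \cite[(2.18)]{BDS} through the tensor-Hom, $f^*\dashv f_*$ and $f_*\dashv f^!$ adjunctions. The key step in both is the compatibility of the projection isomorphism with the unit: the paper cites this as \cite[{3.4.7(i)}]{lipman}, and you derive it from naturality of $\eta$ plus a triangle identity. Both then observe that each path sends $\alpha\colon f^*t\to f^!\mls O_{\mls D}$ to $\epsilon\circ f_*\alpha\circ\eta_t$. One small slip: your test morphism should be $f^*t\otimes f^*\mls O_{\mls D}\to f^!\mls O_{\mls D}$, not $t\otimes f^*\mls O_{\mls D}\to f^!\mls O_{\mls D}$.
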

The proof occupies the remainder of the section. For objects $X, Y \in \mls D$ let $\mls D(X, Y)$ denote the set of morphisms from $X$ to $Y$. To lighten notation let $\mls O = \mls O_{\mls D}$ and  let $\Hom = \Hom_{\mls D}$. By the Yoneda lemma, it is enough to demonstrate commutativity of
\[\begin{tikzcd}[row sep=12]
	{{}^{\gray{1}}\mathscr{D}(X, \mathrm{Hom}(f_*f^*\mathscr{O}, \mathscr{O}))} && {\mathscr{D}(X, \mathrm{Hom}(\mathscr{O}, \mathscr{O})){}^{\gray{3}}} \\
	{\mathscr{D}(X,f_*\mathrm{Hom}(f^*\mathscr{O}, f^!\mathscr{O}))} \\
	{{}^{\gray{2}}\mathscr{D}(X, \mathrm{Hom}(\mathscr{O},f_*f^!\mathscr{O})) }
	\arrow["{\mathrm{(id, -\circ\eta)}}", from=1-1, to=1-3]
	\arrow["\sim"', tail reversed, from=1-1, to=2-1]
	\arrow["\sim"', tail reversed, from=2-1, to=3-1]
	\arrow["{\mathrm{(id, \epsilon \circ-)}}"', from=3-1, to=1-3].
\end{tikzcd}\]
We will do this by building a triangular prism of diagrams with the desired one as its base.
Light gray numbers/letters indicate how to join the diagrams together, equalities denote adjunction bijections, $pr$ denotes projection isomorphisms, and $\lambda$ denotes isomorphisms witnessing compatibility of $f^*$ and $\otimes$.
We now list the additional diagrams. The diagram
\[\begin{tikzcd}[column sep=5pt, row sep=12]
	{{}^{\gray{1}}\mathscr{D}(X, \mathrm{Hom}(f_*f^*\mathscr{O}, \mathscr{O}))} &&& {\mathscr{D}(X\otimes f_*f^*\mathscr{O}, \mathscr{O}){}^{\gray{a}}} \\
	{\mathscr{D}(X,f_*\mathrm{Hom}(f^*\mathscr{O}, f^!\mathscr{O}))} & {\mathscr{D}(f^*X, \mathrm{Hom}(f^*\mathscr{O}, f^!\mathscr{O}))} & {\mathscr{D}(f^*X\otimes f^*\mathscr{O}, f^!\mathscr{O})} & {\mathscr{D}(f_*(f^*X\otimes f^*\mathscr{O}), \mathscr{O})} \\
	{{}^{\gray{2}}\mathscr{D}(X,\mathrm{Hom}(\mathscr{O},f_*f^!\mathscr{O}))} & {\mathscr{D}(X\otimes\mathscr{O}, f_*f^:\mathscr{O})} & {\mathscr{D}(f^*(X\otimes\mathscr{O}), f^!\mathscr{O}){}^{\gray{b}}}
	\arrow[equals, from=1-1, to=1-4]
	\arrow["\sim"', tail reversed, from=1-1, to=2-1]
	\arrow[equals, from=2-1, to=2-2]
	\arrow["\sim"', tail reversed, from=2-1, to=3-1]
	\arrow[equals, from=2-2, to=2-3]
	\arrow[equals, from=2-3, to=2-4]
	\arrow["{(\mathrm{pr, id})}"', from=2-4, to=1-4]
	\arrow[equals, from=3-1, to=3-2]
	\arrow[equals, from=3-2, to=3-3]
	\arrow["{(\lambda, \mathrm{id})}"', from=3-3, to=2-3]
\end{tikzcd}\]
commutes by definition of the isomorphisms \cite[(2.18)]{BDS} and \cite[(2.17)]{BDS}. The diagram
\[\begin{tikzcd}[column sep = 40pt, row sep=12]
	{{}^{\gray{1}}\mathscr{D}(X, \mathrm{Hom}(f_*f^*\mathscr{O}, \mathscr{O}))} & {\mathscr{D}(X,\mathrm{Hom}(\mathscr{O}, \mathscr{O})){}^{\gray{3}}} \\
	{{}^{\gray{a}}\mathscr{D}(X\otimes f_*f^*\mathscr{O}, \mathscr{O})} & {\mathscr{D}(X\otimes\mathscr{O},\mathscr{O}){}^{\gray{d}}}
	\arrow["{(\mathrm{id}, -\circ\eta)}", shift left, from=1-1, to=1-2]
	\arrow[equals, from=1-1, to=2-1]
	\arrow["{\mathrm{id}\otimes\eta, \mathrm{id}}"', from=2-1, to=2-2]
	\arrow[equals, from=2-2, to=1-2]
\end{tikzcd}\]
commutes by naturality of the tensor-Hom adjunction in the object.
The top cell in the diagram
\[\begin{tikzcd}[row sep=12]
{{}^{\gray{a}}\mathscr{D}(X\otimes f_*f^*\mathscr{O}, \mathscr{O})} && {\mathscr{D}(X\otimes\mathscr{O}, \mathscr{O})} \\
{\mathscr{D}(f_*(f^*X\otimes f^*\mathscr{O}), \mathscr{O})} && {\mathscr{D}(f_*f^*(X\otimes\mathscr{O}), \mathscr{O})^{\gray{c}}} \\
{\mathscr{D}(f^*X\otimes f^*\mathscr{O}, f^:\mathscr{O})} \\
{{}^{\gray{b}}\mathscr{D}(f^*(X\otimes\mathscr{O}), f^:\mathscr{O})}
\arrow["{(\mathrm{id}\otimes\eta, \mathrm{id})}", shift left, from=1-1, to=1-3]
\arrow["{\mathrm{(pr, id)}}", from=2-1, to=1-1]
\arrow[equals, from=2-1, to=3-1]
\arrow["{(\eta_{X\otimes\mathscr{O}}, \mathrm{id})}"', from=2-3, to=1-3]
\arrow["{(f_*\lambda, \mathrm{id})}"', from=2-3, to=2-1]
\arrow[equals, from=4-1, to=2-3]
\arrow["{(\lambda, \mathrm{id})}", from=4-1, to=3-1]
\end{tikzcd}\]
commutes by \cite[{3.4.7(i)}]{lipman}, and the bottom cell is functoriality of the $(f_*, f^!)$  adjunction.
Finally, the bottom cell in the diagram
\[\begin{tikzcd}[column sep=40, row sep=12]
{{}^{\gray{b}}\mathscr{D}(f^*(X\otimes\mathscr{O}), f^:\mathscr{O})} & {\mathscr{D}(f_*f^*(X\otimes\mathscr{O}), \mathscr{O})){}^{\gray{c}}} \\
{\mathscr{D}(X\otimes\mathscr{O}, f_*f^:\mathscr{O})} & {\mathscr{D}(X\otimes\mathscr{O}, \mathscr{O}){}^{\gray{d}}} \\
{{}^{\gray{2}}\mathscr{D}(X,\mathrm{Hom}(\mathscr{O}, f_*f^:\mathscr{O}))} & {\mathscr{D}(X, \mathrm{Hom}(\mathscr{O}, \mathscr{O})){}^{\gray{3}}}
\arrow[equals, from=1-1, to=1-2]
\arrow[equals, from=1-1, to=2-1]
\arrow["{(\eta_X{}_\otimes{}_\mathscr{O}, \mathrm{id})}", from=1-2, to=2-2]
\arrow["{(\mathrm{id}, \eta)}", from=2-1, to=2-2]
\arrow[equals, from=2-1, to=3-1]
\arrow[equals, from=2-2, to=3-2]
\arrow["{(\mathrm{id}, \eta\circ-)}", shift right, from=3-1, to=3-2]
\end{tikzcd}\]
commutes by functoriality of the tensor-Hom adjunction in the object. We show the top cell commutes by direct computation. Write $Y = X \otimes \mls O$.
Let $\varphi: f^*Y \to f^! \mls O$ be a morphism. Its image under the arrows going right and then down, or under the arrows going down and then right, is the composition
\[
Y \xrightarrow{\eta_Y} f_*f^*Y \xrightarrow{f_*\varphi}  f_*f^!\mls O \xrightarrow{\epsilon} \mls O.
\]

\section{Segre classes in exact sequences}\label{sec:intersection theory}

\begin{lemma}[cf. {\cite[Example 4.1.6]{fulton}}]
    \label{lemma: properties of segre}
    Let $X$ be an algebraic stack.
    For an exact sequence
                $0 \to E \xrightarrow{\psi} C \xrightarrow{\phi} C^{\prime} \to 0$
            of cones on $X$,
            we have
            \[
                s(C^{\prime}) = c(E) \cap s(C) \in A_{\ast}(X).
            \]
\end{lemma}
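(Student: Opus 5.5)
The plan is to follow Fulton's argument for Example 4.1.6, using the projective closures $\PP(C\oplus 1)$ and $\PP(C'\oplus 1)$ together with a deformation that degenerates $C$ to $E\times_X C'$.

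\emph{Reduction to a product.} Since $E$ is a vector bundle acting on $C$ through $\psi$, and $\phi$ exhibits $C \to C'$ as an $E$-torsor, I first deform $C$ to $E\oplus C'$. Over $X\times\AA^1$, form the family of cones $\mathcal C$ whose fiber over $t\neq 0$ is $C$ and whose fiber over $t=0$ is $E\times_X C'$. Concretely, one can take $\mathcal C$ to be the graph-closure deformation $M^\circ_{C'}C$ associated to the zero section $C'\hookrightarrow C$ defined by the splitting: on graded algebras, it is the Rees-type deformation of $\mathrm{Sym}$ with respect to the filtration induced by $\psi$. Alternatively, work locally, where exact sequences of cones split (Fulton B.5.5 / BF's Lemma), and then glue. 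Segre classes are constant in flat families of cones, since $s$ is defined via pushforward of $c_1(\OO(1))$-powers and these commute with specialization (Fact \ref{fact:zero-fiber}). This reduces the problem to $C = E\oplus C'$. I expect the flat degeneration on a stack, with Kresch's groups and specialization, to be the main obstacle. I would first try to avoid it altogether by pulling back along the smooth map $C'\to X$ and using that $C\to C'$ is an $E$-torsor. The canonical trivialization of the pulled-back torsor over $C$ itself might let me show directly that $\PP(C\oplus 1)$ and $\PP(E\oplus C'\oplus 1)$ have equal Segre pushforwards.

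\emph{Product case.} For $C = E\oplus C'$, I will compute as in Fulton Example 4.1.6(c). Write $s(C)$ via the identity $s(C) = \sum_i \pi_* c_1(\OO(1))^i\cap[\PP(C\oplus 1)]$. Then $\PP(C'\oplus 1)$ sits in $\PP(E\oplus C'\oplus 1)$ as the zero locus of the tautological section of $q^*E\otimes\OO(1)$, which is a regular section. Its top Chern class therefore cuts out $[\PP(C'\oplus 1)]$: we have $c_{e}(q^*E\otimes\OO(1))\cap[\PP(C\oplus1)] = [\PP(C'\oplus 1)]$. Multiplying by $\sum c_1(\OO(1))^i$ and expanding $c_e(q^*E\otimes \OO(1))=\sum_j c_j(E)c_1(\OO(1))^{e-j}$, then pushing forward via the projection formula, I get $s(C') = \sum_j c_j(E)\cap s(C)$, up to the degree shifts that match in the total class. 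This is $c(E)\cap s(C)$.

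\emph{Stacky inputs.} Kresch's Chow theory provides Chern classes of vector bundles, the projection formula, and the Gysin identity for regular sections. Pushforward along the projective morphism $\PP(C\oplus 1)\to X$ is defined. With these, the computation goes through verbatim in $\prod A_*(X)$.
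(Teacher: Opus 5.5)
\textbf{Verdict: there is a gap.} You assume the one stack-theoretic ingredient the paper actually proves, and your reduction to the split case is partly misstated.

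\textbf{The split case and the missing ingredient.} Your split-case computation is the Fulton argument the paper imports. $\PP(C'\oplus 1)$ is the zero scheme of the tautological regular section of $q^*E\otimes\OO(1)$ on $\PP(E\oplus C'\oplus 1)$, and expanding $c_e(q^*E\otimes\OO(1))$ and pushing forward gives the formula. The coefficient matching is not a matter of ``degree shifts'': it uses $q_*(c_1(\OO(1))^k\cap[\PP(C\oplus 1)])=0$ for $k<e$, which holds because these classes have dimension $>\dim C'\geq \dim X$.

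The gap is the regular-section identity. You list ``the Gysin identity for regular sections'' among the things Kresch's theory provides. The paper says this is not immediate in Kresch's framework, and it is the only thing its proof adds to Fulton's; it is proved separately as Lemma~\ref{lem: chern class regular section}. That lemma states: for a regular section $s$ of a bundle $E$ on a purely $d$-dimensional stack $X$ with zero locus $Z$, $p_E^*[Z]$ equals the class of the zero section in the naive group $A^\circ_*(E)$. The witness is $\mathcal Z=\VV(\lambda\tau+\mu\, p_E^*s)\subset E\times\PP^1$, whose fibres over $0$ and $\infty$ are the zero section and $p_E^{-1}(Z)$. It is a genuine rational equivalence because a Krull principal ideal theorem count shows every component of $\mathcal Z\setminus\mathcal Z_0$ has dimension at least $d+1$, so none lies in the fibre over $\infty$. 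Without this, your identity $c_e(q^*E\otimes\OO(1))\cap[\PP(C\oplus1)]=[\PP(C'\oplus1)]$ is unproved on a stack. You also need to apply it to each irreducible component of $C'$ separately, since the lemma requires pure dimension.

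\textbf{The reduction to the split case.} Three of the constructions you propose do not work:
\begin{itemize}
\item There is no global ``zero section $C'\hookrightarrow C$ defined by the splitting''; if there were, the sequence would already split.
\item $C'\to X$ is not smooth, or even flat, in general (normal cones are the typical examples), so you cannot pull back along it.
\item Local splittings do not glue to an equality of global classes.
\end{itemize}
What does work is your Rees description made precise. Deform $C$ to the normal cone of $E=\phi^{-1}(X)$, the fibre of $\phi$ over the vertex $X\subset C'$. Its ideal is homogeneous, so this gives a flat family of cones $\mathcal C$ over $X\times\AA^1$. The general fibre is $C$, and the special fibre is $C_E C\cong E\times_X C'$. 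This holds because normal cones commute with the flat base change $\phi$, and the normal cone of the vertex of $C'$ is $C'$ itself.

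Constancy of $s$ in this family then requires three facts, each of which you would have to verify in Kresch's setting:
\begin{itemize}
\item Fact~\ref{fact:zero-fiber}, applied to the fibres of $\PP(\mathcal C\oplus 1)$;
\item compatibility of intersecting with a fibre with $c_1(\OO(1))$ and with proper pushforward;
\item equality of the restrictions of a class on $X\times\AA^1$ to $t=0$ and $t=1$.
\end{itemize}
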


\begin{proof}
The proof of \cite[Example 4.1.6]{fulton} works. For completeness we replace \cite[Example 3.2.16(ii)]{fulton} with Lemma~\ref{lem: chern class regular section}, as this result is not immediate in Kresch's paper.
\end{proof}

\begin{lemma}
    \label{lem: chern class regular section}
    Let $Z = \VV(s) \subset X$ be the vanishing locus of a regular section
    $s$ of a rank $r$ vector bundle $E$ on a purely $d$-dimensional stack $X$.
    Then
    we have $p_{E}^{\ast}[Z] = s_{E\ast}[X]$ in the naive Chow group $A_{\ast}^{\circ}(E)$.
\end{lemma}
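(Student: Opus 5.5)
We must prove Lemma \ref{lem: chern class regular section}: if $Z=\VV(s)$ is the zero locus of a regular section $s$ of a rank $r$ bundle $E$ on a purely $d$-dimensional stack $X$, then $p_E^*[Z]=s_{E*}[X]$ in $A^\circ_*(E)$. The plan is to mimic Fulton's deformation argument, which realizes both sides as the two ends of a rational equivalence on $E$. Work in the naive Chow group $A^\circ_*(E)$, i.e.\ cycles of integral closed substacks modulo rational equivalence given by principal divisors on integral substacks. It is enough to produce, on $E\times\PP^1$, a suitable subvariety whose fibers over $0$ and $\infty$ are the two cycles.

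\emph{Step 1 (the deformation).} Consider the map $X\times(\AA^1\setminus 0)\to E\times(\AA^1\setminus 0)$ given by $(x,t)\mapsto (t^{-1}s(x),t)$. Let $W\subset E\times\PP^1$ be the closure of its image, taken componentwise on the irreducible components $X_i$ of $X$. As in the graph construction used in Lemma~\ref{lem:closed-basechange}, $W$ is an integral combination of integral closed substacks of dimension $d+1$, each flat over $\PP^1$ and not contained in a fiber.
\begin{itemize}
\item Near $t=\infty$ (coordinate $u=t^{-1}$), the section $us$ specializes to the zero section, so $W_\infty=s_E(X)$, i.e.\ $[W_\infty]=s_{E*}[X]$. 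Here the flatness of $X\times\PP^1$ away from $0$ is used, and multiplicities are preserved componentwise.
\item The fiber over $t=0$ is the normal cone $C_ZX\subset E|_Z$. This follows from the deformation to the normal cone (Definition~\ref{def:mxy}(i)) and the graph description in the proof of Lemma~\ref{lem:closed-basechange}.
\end{itemize}

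\emph{Step 2 (identify the special fiber).} Since $s$ is regular, $Z\hookrightarrow X$ is a regular embedding with normal bundle $E|_Z$. Hence $C_ZX=N_ZX=E|_Z$ as closed substacks, and the cycle $[W_0]$ equals $[E|_Z]$. The step requiring care is that the fundamental cycle of the scheme-theoretic fiber $W_0$ is exactly $[E|_Z]=p_E^*[Z]$ with the correct multiplicities. This holds because $W_0$ is a Cartier divisor in the flat family, so its cycle is the divisor $\mathrm{div}(t)$ restricted to $W$. Both this property and the regularity statement are local in the smooth topology. I would therefore check them on a smooth atlas $U\to X$, where $s$ remains regular and everything reduces to Fulton's scheme case. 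Flat pullback of cycles then descends the equality, since integral substacks and their multiplicities are determined smooth-locally.

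\emph{Step 3 (rational equivalence).} For each integral component $W_i$ of $W$, the rational function $t$ (viewed as a rational map to $\PP^1$) has
\[
\mathrm{div}(t)=[(W_i)_0]-[(W_i)_\infty].
\]
Pushing forward along $E\times\PP^1\to E$, which is proper on $W$ and an isomorphism on each fiber, we obtain $[W_0]\sim[W_\infty]$ in $A^\circ_*(E)$. Thus $p_E^*[Z]=s_{E*}[X]$. I expect the main obstacle to be justifying, for stacks, that Kresch's naive rational equivalence allows divisors of rational functions on integral closed substacks such as $W_i$, together with the properness of $W\to E$. The first point is by definition of $A^\circ_*$. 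For the second, $W\to E$ is a closed substack of $E\times\PP^1$, and projection along $\PP^1$ is proper.
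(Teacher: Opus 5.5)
Your proof is correct, but it follows a different route from the paper's.

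The paper takes no closure. It works with the zero scheme $\mathcal{Z}=\VV(\lambda\tau+\mu p_E^*s)\subset E\times\PP^1$, whose fibres over the two poles are visibly the zero section and $p_E^{-1}(Z)$. Its only work is a local Krull principal-ideal count on $\Spec R[t,x_1,\dots,x_r]/(tx_i-f_i)$: every component has dimension at least $d+1$, so none lies in the $d$-dimensional fibre $p_E^{-1}(Z)$.

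You instead take $W$ to be the closure of the graph of $t^{-1}s$. Flatness over $\PP^1$ and the Cartier property of $W_0$ then come for free. You identify $W_0$ through the deformation to the normal cone, using regularity only in the form $C_ZX=N_ZX=E|_Z$.

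What each route buys:
\begin{itemize}
\item The paper's route is more elementary: it needs neither the graph construction nor the structure of normal cones of regular embeddings.
\item Your route proves more along the way, namely $[C_ZX]=s_{E*}[X]$ in $A^\circ_*(E)$ for an arbitrary section. It also puts regularity exactly where multiplicities are decided.
\item The Krull count by itself only excludes irreducible components of $\mathcal{Z}^\circ$ inside the special fibre. To read off $p_E^*[Z]$ from the scheme-theoretic fibre, one also needs $t$ to be a nonzerodivisor on $\mathcal{Z}^\circ$. This holds because locally $t,tx_1-f_1,\dots,tx_r-f_r$ is a regular sequence that can be permuted, and then $\mathcal{Z}=W$.
\end{itemize}

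One point to tighten in your write-up: $W$ must be the scheme-theoretic image of $X\times(\AA^1\setminus 0)$, not the union of the closures of the reduced components $X_i\times(\AA^1\setminus 0)$. Only the former has special fibre $C_ZX$. Its fundamental cycle is $\sum_i m_i[W_i]$, which is what your Step 3 needs (via Fulton, Lemma 1.7.2, applied to the Cartier divisor $W_0$).
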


\begin{proof}
    Let $\tau$ be the tautological section of $p_{E}^{\ast}E$,
    and let $s^{\prime} = p_{E}^{\ast}s $.
    Let $\mc{Z} = \VV(\lambda \tau + \mu s^{\prime}) \subset E \times
    \PP^{1}_{[\lambda : \mu]}$.
    Then,
    the fibre of $\mc{Z}$ above $0 = [1:0]$ is the zero-section of $E$,
    whereas its fibre above $\infty = [0:1]$ is $p_{E}^{-1}(Z)$.
    Thus,
    $\mc{Z}$ would furnish a rational equivalence witnessing the desired
    equality once we are able to show that it is pure-dimensional,
    and has no components contained in any fibre.
    Since $\mc{Z} \setminus \mc{Z}_{\infty}$
    is isomorphic to $X \times \AA^{1}$ over $\AA^{1}$,
    it suffices to verify that $\mc{Z}^{\circ} = \mc{Z} \setminus \mc{Z}_{0}$
    has no components supported
    on the infinity fibre.

    The desired claim is local in nature,
    so we may reduce to the case where $X = \Spec R$
    is an affine scheme.
    Letting $t = \lambda/\mu$,
    we see that $\mc{Z}^{\circ}$ takes the form
    \[
        \mc{Z}^{\circ} = \Spec\frac{R[t, x_{1}, \dots, x_{r}]}{\left\langle tx_{1}-f_{1}, \dots, tx_{r}-f_{r}\right\rangle },
    \]
    with $f_{1}, \dots, f_{r} \in R$ being the regular sequence
    whose vanishing defines $Z$.
    Let $z \in \mc{Z}^{\circ}$ be an arbitrary point.
    A repeated application of Krull's Principal Ideal Theorem,
    combined with the pure-dimensionality of $X$,
    shows that $\dim \OO_{\mc{Z}^{\circ}, z} \geq
        \dim \OO_{E \times \AA^{1}, z} - r = (d + r + 1) - r = d + 1$ .
    In particular,
    any component of $\mc{Z}^{\circ}$ has dimension at least $d + 1$,
    and thus cannot be supported on $\mc{Z}_{\infty}$.

\end{proof}

\normalem
\printbibliography

\end{document}